\newtheorem{thm}{Theorem}
\newtheorem{lem}{Lemma}
\newcommand{\norm}[1]{||{#1}||}
\newcommand{\script}[1]{{{\cal{#1} }}}
\begin{document}

\title
  {Sharing Information Without Regret in Managed Stochastic Games}
\author{Michael J. Neely\\University of Southern California\\\url{http://www-bcf.usc.edu/~mjneely}
\thanks{The author is with the  Electrical Engineering department at the University
of Southern California, Los Angeles, CA.} 
\thanks{This work is supported in part  by the NSF Career grant CCF-0747525.}
}

\markboth{}{Neely}

\maketitle

\begin{abstract}   
This paper considers information sharing in a multi-player repeated game.  Every round, each player observes a subset of components of a random vector and then takes a control action.  The utility earned by each player depends on the full random vector and on the actions of others.  An example is a game where different rewards are placed over multiple locations, each player only knows the rewards in a subset of the locations, and players compete to collect the 
rewards. Sharing information can help others, but can also increase competition for desirable locations. Standard Nash equilibrium and correlated equilibrium concepts are inadequate in this scenario. Instead, this paper develops an algorithm where, every round, all players pass their information and intended actions to a game manager.  The manager provides suggested actions for each player that, if taken, maximize a concave function of average utilities subject to the constraint that each player gets an average utility no worse than it would get without sharing.  The algorithm acts online using information given at each round and does not require a specific model of random events or player actions. Thus, the analytical results of this paper apply in non-ergodic situations with any sequence of actions taken by human players. 
\end{abstract} 

\section{Introduction}

This paper considers a stochastic game where each player has incomplete information.  
A central issue is whether or not this information should be shared.   Indeed, players with special access to desirable information may prefer to keep this information private. The goal of this paper is to design an efficient collaborative strategy that allows players to share information  without sacrificing their own interests. 

The general game structure is as follows:  There are $N$ players that repeatedly play a game over 
a sequence of  rounds $t \in \{0, 1, 2, \ldots\}$.  On each round $t$, there is a random event vector $\omega(t)=(\omega_1(t), \ldots, \omega_M(t))$ that describes characteristics of the game for that round. The value $M$ is a positive integer that can be different from $N$.   Each player can observe a  \emph{portion} of the components of the $\omega(t)$ vector.  Specifically, for each $i \in \{1, \ldots, N\}$,  define $\script{S}_i$ as the subset of indices in $\{1, \ldots, M\}$ that are observable by player $i$.   Every round $t$, each player $i$ observes its components of $\omega(t)$ and then chooses an action $\alpha_i(t)$ based on this (incomplete) information. Define $\alpha(t) = (\alpha_1(t), \ldots, \alpha_N(t))$ as the joint action vector.  The resulting round-$t$ payoff for player $i$ is $u_i(t)$, also called the \emph{utility}.  The utility $u_i(t)$ is a general function of $\alpha(t)$ and $\omega(t)$: 
\[ u_i(t) = \hat{u}_i(\alpha(t), \omega(t)) \: \: \forall i \in \{1, \ldots, N\} \]
Each player wants to earn a large time average utility $\overline{u}_i$: 
\[ \overline{u}_i = \lim_{t\rightarrow\infty} \frac{1}{t}\sum_{\tau=0}^{t-1} u_i(\tau) \]

Pooling information about the $\omega(t)$ vector and making a \emph{team decision} can improve the sum utility.  However, individual players may want to keep their information private to increase their own utility. 

\subsection{Example game structure} \label{section:location-reward} 

\begin{figure}[htbp]
   \centering
   \includegraphics[width=2.5in]{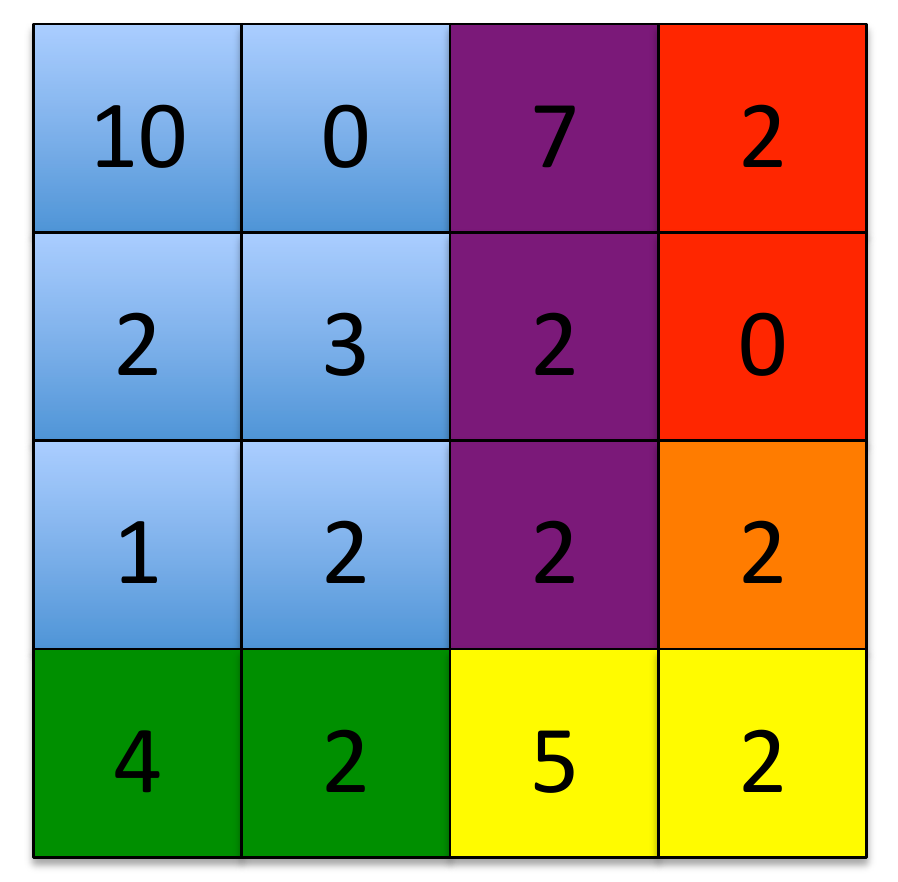} 
   \caption{An illustration of a 3-player location-reward game. Locations known only to player $i$ are blue, yellow, red for $i \in \{1,2,3\}$, respectively.  Locations known to both players 1 and 2 are green, players 2 and 3 are orange, and players 3 and 1 are purple.}
   \label{fig:games-grid-color}
\end{figure}

Consider a square region that is partitioned into $M$ disjoint sub-regions, called \emph{locations} (see Fig. \ref{fig:games-grid-color} with $M=16$ locations). 
Every round, a random reward $\omega_m(t)$ appears in each location $m \in \{1, \ldots, M\}$.  
Let $\omega(t) = (\omega_1(t), \ldots, \omega_M(t))$ be the vector of current rewards.  
For example, $\omega(t)$ might be a random vector that is independent and identically distributed (i.i.d.) over rounds $t \in \{0, 1, 2, \ldots\}$ with some arbitrary  joint probability distribution. 
Suppose there are 3 players: 
\begin{itemize} 
\item Player 1 knows  rewards for the blue, purple, and green squares. 
\item Player 2 knows rewards for the yellow, green, and orange squares. 
\item Player 3 knows rewards for the red, orange, and purple squares. 
\end{itemize} 

Every round, each player chooses a single location where it competes for the current reward. 
Specifically, 
for each $i \in \{1, 2, 3\}$, let $\script{A}_i$ be a subset of $\{1, \ldots, M\}$ that represents the set of locations player $i$ is allowed to choose from, called the \emph{action set} for player $i$. The sets $\script{A}_i$ and $\script{S}_i$ can be different, so that a player might choose a location in which she does not know the reward. 
Let $\alpha_i(t)$ be the location in $\script{A}_i$ chosen by player $i$ on round $t$. If a player is the only one to choose a certain location $m$, she earns the full reward $\omega_m(t)$. Else, the reward is split evenly amongst all players who choose that location. Specifically, for each $m \in \{1, \ldots, M\}$ 
define $K_m(t)$ as the number of players who choose location $m$ on round $t$.  The resulting utility for player $i\in \{1, 2, 3\}$ is: 
\[ u_i(t) = \frac{\omega_{\alpha_i(t)}(t)}{K_{\alpha_i(t)}(t)} \]
This utility is indeed a function of the vectors $\alpha(t)$ and $\omega(t)$: 
\begin{equation} \label{eq:example-utility} 
 \hat{u}_i(\alpha, \omega) = \frac{\omega_{\alpha_i}}{\sum_{n=1}^3 1\{\alpha_n=\alpha_i\}} 
 \end{equation} 
where $1\{\alpha_n= \alpha_i\}$ is  an indicator function that is $1$ if $\alpha_n=\alpha_i$, and $0$ else. The denominator in \eqref{eq:example-utility} 
is always nonzero since $1\{\alpha_i = \alpha_i\}=1$ for all $i \in \{1, 2, 3\}$. 

For the scenario illustrated in Fig. \ref{fig:games-grid-color}, player 1 is the only one to see the highly desirable reward of 10 that is currently in the top left location.  Player 1 might want to keep this information private to reduce the chance of other players competing for the same location. 

\subsection{Prior work on repeated games} 

 Adaptive methods that converge to a \emph{correlated equilibrium} for repeated play of 
 static games are developed in  \cite{foster-vohra-games-CE}\cite{hart-correlated-games-CE}\cite{fudenberg-games-CE}. Correlated equilibrium in stochastic games is
 considered in \cite{cor-eq-stochastic-games}\cite{repeated-games-arxiv}.  The formulation of the current paper is the most similar to \cite{repeated-games-arxiv}, where a game manager helps to achieve 
 correlated and coarse correlated equilibrium in repeated stochastic games.  However, the prior work \cite{foster-vohra-games-CE}\cite{hart-correlated-games-CE}\cite{fudenberg-games-CE}\cite{cor-eq-stochastic-games}\cite{repeated-games-arxiv} does not consider the problem of information sharing, and so the notions of equilibrium they study do not directly apply in the current context.  Further, the work on stochastic games in \cite{cor-eq-stochastic-games}\cite{repeated-games-arxiv} considers an ergodic regime, while the current paper considers arbitrary sample paths that are possibly non-ergodic. 

\subsection{Inadequacy of standard equilibrium definitions} 

Standard definitions of Nash equilibrium \cite{nash-games}\cite{nash-n-person-games}, correlated equilibrium \cite{aumann-correlated-eq1}\cite{aumann-correlated-eq2}, and coarse correlated equilibrium  \cite{CCE} are inadequate in the scenario of this paper. That is because such equilibrium definitions require the utility of each player $i$ to be at least as large as it would be if player $i$ individually deviated from the intended strategy \emph{while all other players continue to 
use the intended strategy}.  However, if player $i$ deviates by choosing 
not to share information, the intended strategies of others may no longer be possible  because they might rely on this  information.  

In principle, one could circumvent this difficulty by forcing the repeated game structure to look like a 1-shot game for which standard notions of equilibrium exist.  For example, this could be done by allowing each player to make a binary decision at the start that determines whether or not she will share information.  Her remaining decisions can be viewed as an 
element of a strategy space defined over infinite sequences of actions.   This approach is taken in \cite{cor-eq-stochastic-games}
by using the concept of \emph{infinitely punishing deviant behavior} (also see discussions in 
\cite{game-theory-book}).  There, players are assigned strategies that require them to maximally punish any non-conformist by taking actions (for all time) that are solely designed to yield poor utility 
for the non-conformist.   This can lead to an equilibrium because such punishment never occurs 
(since all players conform out of fear), and so (mathematically) players do not object to having 
an unused requirement to punish others as part of their decision strategy.  
However, this approach does not necessarily capture realistic behavior. Human players will \emph{not}  spend the rest of their lives punishing a non-conformist.  Rather, human players will adapt their behavior to emerging conditions.  The mathematical threat of infinite punishment is seen to be a sham that lacks power to realistically influence behavior. 

A modified definition of \emph{subgame perfect equilibrium} is often used as an attempt to make punishment threats credible \cite{game-theory-book}.   It requires equilibrium-type conditions to be met even for unused punishment modes of a strategy.  Such conditions can often be met by using finite-length punishment modes.  However, in many repeated games, such modes can be appended to almost any strategy to endow that strategy with the subgame perfect equilibrium property (see theorems on repeated games with time average utility metrics in \cite{game-theory-book}).  

Overall, while prior notions of equilibrium for repeated games exist and have well defined mathematical properties, they can be complex and difficult for humans to  interpret.  
Arguably, a human player wants a more direct comparison of the suggested strategy with some other reasonable course of action.  This paper uses a much simpler ``no regret'' guarantee that compares the suggestions to a single 
baseline strategy, rather than to all possible strategies. The baseline strategy is defined by any alternative sequence of actions that a player wants to consider.   The challenge  is to dynamically make suggestions as the game is being played. The suggestions must meet the desired performance for arbitrary sample paths, and therefore must adapt as new events emerge.

\subsection{The game manager} 

This paper deviates from the standard equilibrium approach by assuming the existence of a \emph{game manager} to which players pass their information.  Further, every round 
it is assumed that each player $i \in \{1, \ldots, N\}$ has a \emph{baseline decision} $b_i(t) \in \script{A}_i$. 
The baseline decision $b_i(t)$ is any decision the human player would choose if she were operating on her own without a manager and without information sharing.   The manager takes the given information every round $t$ and produces \emph{suggested actions} for each player.  The suggested actions must have the property that, if every player uses the suggestions, a concave function of average utilities across players is maximized subject to the constraint that each player receives an average utility at least as large as the average 
value it would earn if all players used their baseline strategies.  The running average utility gains between the suggested and baseline strategies can be given to each player on each round.   The understanding is that players have more incentive to take suggested actions if they see the 
gains of doing so. 
  This approach does not rely on punishment modes in a complex strategy space.  

This formulation is interesting because it defines a specific optimization problem that uses the (possibly human-generated) sequence of baseline strategies as part of the optimization.   The solution approach uses Lyapunov optimization theory in this new context.   Lyapunov optimization is known to have \emph{universal scheduling} properties that provide analytical guarantees for arbitrary sequences \cite{sno-text}.  Those properties are used in the game theory context of this paper to provide a simple online algorithm for making manager decisions as the game is played.  This paper shows that the resulting algorithm  provides analytical guarantees for arbitrary baseline sequences and arbitrary event sequences, including sequences with no probabilistic description. 

\section{Examples} 

To gain intuition, this section provides examples of a 2-player location-reward game with only two locations. 
The random event vectors
$\omega(t)=(\omega_1(t), \omega_2(t))$ are assumed to be independent and identically distributed (i.i.d.) over rounds $t \in \{0, 1,2, \ldots\}$.  This allows exact computation of average utility associated with 
different strategies.  
The examples are designed to show that willingness to share information can depend on the statistical distribution of rewards and also on the constraint sets of individual players.   
For simplicity, the examples in this section assume the full joint probability distribution of $(\omega_1(t), \omega_2(t))$ is known by both players. The general model of Section \ref{section:problem} treats a more complex scenario where 
the event vectors $\omega(t)$ are arbitrary sequences, possibly non-ergodic sequences with no known probabilistic structure.

\begin{figure}[htbp]
   \centering
   \includegraphics[width=1.5in]{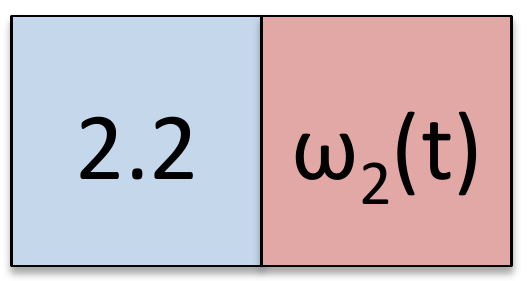} 
   \caption{A location-reward game with two locations and two players. Player 1 knows the reward in the left location, but this reward is always 2.2.  Player 2 knows the random reward $\omega_2(t)$ in the right location.}
   \label{fig:two-player-locations}
\end{figure}

\subsection{Example 1: Beneficially withholding information} \label{section:example1}

Consider a location-reward game (as described in Section \ref{section:location-reward}) with 
two players and two locations, as shown in Fig. \ref{fig:two-player-locations}.  Player 1 knows the reward $\omega_1(t)$ associated with location 1 and player 2 knows the reward $\omega_2(t)$ associated with location 2.  Every round, player 1 can choose from either of the two locations.  However, suppose that player 2 is restricted to only choosing location 2.   The reward probabilities are: 
\begin{eqnarray*} 
\omega_1(t) &=& 2.2 \: \: \mbox{ with probability 1} \\
\omega_2(t) &=&  \left\{ \begin{array}{ll}
10 &\mbox{ with probability $1/5$} \\
2  & \mbox{ with probability $4/5$} 
\end{array}
\right. 
\end{eqnarray*}
The vectors $\omega(t) = (\omega_1(t), \omega_2(t))$ are i.i.d. over rounds $t$, and the above probabilities are known to both players. Player 2 is the only one with knowledge of the actual realization of the time-varying reward in location 2. 

\subsubsection{Without sharing information} Suppose player 2 does not share its knowledge. If player 1 chooses location 2, its expected utility is $5(1/5) + 1(4/5) = 1.8$, which is strictly less than the utility of 2.2 it would achieve by choosing location 1.  Hence, the optimal strategy for player 1 is to always choose location 1.  Assuming that player 1 uses this optimal strategy, the resulting average utilities are: 
\begin{eqnarray}
\overline{u}_1 &=& 2.2 \label{eq:earned1} \\
\overline{u}_2 &=& 10(1/5) + 2(4/5) = 3.6 \label{eq:earned2} 
\end{eqnarray}

\subsubsection{Sharing information} Suppose player 2 chooses to always 
divulge the value of $\omega_2(t)$ before either player makes a decision.  In this case, it is optimal for player 1 to choose location 2 whenever $\omega_2(t)=10$, and to choose location 1 otherwise.  The resulting utilities under this strategy 
are: 
\begin{eqnarray*}
\overline{u}_1 &=&  2.2(4/5) + 5(1/5) = 2.76\\
\overline{u}_2 &=& 2(4/5) + 5(1/5) = 2.6
\end{eqnarray*}
In comparison to the utilities of \eqref{eq:earned1}-\eqref{eq:earned2}, it is clear that player 1 increases her utility by taking advantage of the shared information.  However, player 2 \emph{reduces} her utility because she now competes for the desirable reward of 10 when that reward appears.  Thus, in this example, player 2 has no incentive to share information.  Player 2 prefers to keep her information private. 

\subsection{Example 2: Beneficially sharing information}  \label{section:example2} 

Consider the same two-player location-reward game as the previous subsection.  
The only difference is that the reward probabilities are different: 
\begin{eqnarray} 
\omega_1(t) &=& 2.2 \: \: \mbox{ with probability 1}  \label{eq:example1} \\
\omega_2(t) &=& \left\{ \begin{array}{ll}
10 &\mbox{ with probability $1/2$} \\
2  & \mbox{ with probability $1/2$}  
\end{array}
\right. \label{eq:example2} 
\end{eqnarray}

\subsubsection{Without sharing information} 

Without sharing information, it is optimal for player 1 to always choose location 2.  The resulting utilities under this strategy are: 
\begin{eqnarray*}
\overline{u}_1 = \overline{u}_2 = 5(1/2) + 1(1/2) = 3 
\end{eqnarray*}

\subsubsection{Sharing information} 

If player 2 shares $\omega_2(t)$ with player 1 every round $t$, then the optimal strategy for player 1 is to choose location 2 whenever $\omega_2(t)=10$, and choose location 1 otherwise. The resulting utilities under this optimal strategy are: 
\begin{eqnarray*}
\overline{u}_1 &=& 2.2(1/2) + 5(1/2) = 3.6 \\
\overline{u}_2 &=& 2(1/2) + 5(1/2) = 3.5
\end{eqnarray*}
In this example, sharing information allows \emph{both} players to increase their average utility.  Player 2 wins by revealing the value of $\omega_2(t)$ because it discourages player 1 from competing 
when the reward is small.    

The examples in Sections \ref{section:example1} and \ref{section:example2} show that changing the probability distribution of random events can impact the optimality or sub-optimality of sharing information.  
The problem treated in this paper is even more challenging 
because the probabilities are unknown and can possibly change. 

\subsection{Example 3: Unrestricted actions} 

Now consider the same example of the previous subsection, with the same reward probabilities as \eqref{eq:example1}-\eqref{eq:example2}. The only difference is that now both players 1 and 2 are free to select either of the two locations. 

\subsubsection{Without sharing information} 

Without knowledge of $\omega_2(t)$, it is optimal for player 1 to always choose location 2. Indeed, the \emph{smallest} expected reward it can earn by doing this is computed by assuming it must always share its rewards in location 2, and this leads to a value of $5/2 + 1/2 = 3 > 2.2$. If player 2 assumes that player 1 uses its optimal policy, then its best strategy is to choose location 2 whenever $\omega_2(t)=10$, and to choose location 1 if $\omega_2(t)=2$.   Under these optimized strategies, the utilities are: 
 \begin{eqnarray*}
\overline{u}_1 &=& 5(1/2) + 2(1/2) = 3.5 \\
\overline{u}_2 &=& 5(1/2) + 2.2(1/2) = 3.6
\end{eqnarray*}

\subsubsection{Sharing information} 

Suppose player 2 shares $\omega_2(t)$ with player 1 every round $t$.  In this case, it is optimal for both players to choose location 2 whenever $\omega_2(t)=10$.  However, the optimal decisions for players 1 and 2 are unclear when $\omega_2(t)=2$, since optimality of each player depends on the policy implemented by the other.  However, the \emph{highest} utility that player 2 can achieve is $3.6$ (which assumes it always gets exclusive access to the 2.2 reward in location 1 when $\omega_2(t)=2$).  Thus, at best, player 2 can only achieve the same utility by sharing its information, but more likely stands to \emph{loose} utility when this information is shared.  Therefore, in this example, a rational and self-interested player 2 would not want to share information. 

\section{The general model} \label{section:problem} 

Fix $N$ as an integer larger than 1. 
Suppose there are $N$ players that play a game over rounds $t\in\{0, 1, 2, \ldots\}$. 
Define $\script{N} = \{1, \ldots, N\}$ as the set of players. 
Fix $M$ as a positive integer (possibly different from $N$) and suppose that $\omega(t) = (\omega_1(t), \omega_2(t), \ldots, \omega_M(t))$ is a sequence of event vectors over rounds $t \in \{0, 1, 2, \ldots\}$.  Assume that $\omega(t)$ takes values in some abstract (possibly infinite) set $\Omega$ on each round $t$. 
The vector sequence $\omega(t)$ is otherwise 
arbitrary and can have arbitrary correlations over entries and over time.  A probabilistic description of $\omega(t)$ is not necessarily known to the players, and such a probabilistic description may not even exist. 
For each $i \in \script{N}$, define $\script{S}_i$ as the subset of  $\{1, \ldots, M\}$ associated with components of  $\omega(t)$ that player $i$ can observe at the beginning of each round.  That is, player $i$ knows $\omega_j(t)$ for all $j \in \script{S}_i$ and for all $t$.  It is assumed that $\cup_{i=1}^N \script{S}_i = \{1, \ldots, M\}$, so that the combined knowledge of all players gives the full $\omega(t)$ vector. 

Every round $t$, each player $i\in \script{N}$ observes its components of $\omega(t)$ and 
chooses an \emph{action} $\alpha_i(t)$ as an element in some abstract (possibly infinite) 
action set $\script{A}_i$.  Let $\alpha(t) = (\alpha_1(t), \ldots, \alpha_N(t))$ be the action vector. The resulting utility $u_i(t)$ earned by player $i$ on round $t$ is a general function of $\alpha(t)$ and $\omega(t)$: 
\[ u_i(t) = \hat{u}_i(\alpha(t), \omega(t)) \: \: \forall i \in \script{N} \]

\subsection{Assumptions} 

The functions $\hat{u}_i(\alpha, \omega)$ are assumed to be non-negative and upper-bounded.  Specifically, for each $i \in \script{N}$, assume there is a maximum utility value $u_i^{max}<\infty$ such that: 
\[ 0 \leq \hat{u}_i(\alpha, \omega) \leq u_i^{max} \: \: \: \: \forall (\alpha, \omega)  \in \script{A}_1\times \cdots \times \script{A}_N \times \Omega \]
For simplicity, further assume the utility functions are such that for every 
$\omega \in \Omega$, the problem of choosing an action vector $\alpha = (\alpha_1, \ldots, \alpha_N)$ to maximize a weighted sum of utilities has a well defined (possibly non-unique) 
maximizing solution $\alpha^*$.  Specifically, the following problem has a well defined maximum (so that the supremum objective function value is achievable) 
 for all possible real numbers $\beta_i$: 
\begin{eqnarray}
\mbox{Maximize:} & \sum_{i=1}^N \beta_i \hat{u}_i(\alpha, \omega) \label{eq:simplicity1}  \\
\mbox{Subject to:} & \alpha_i \in \script{A}_i \: \: \forall i \in \script{N} \label{eq:simplicity2} 
\end{eqnarray}
This is a mild assumption that holds in most practical cases.\footnote{Assuming existence of a maximizer $\alpha^*$ for the problem \eqref{eq:simplicity1}-\eqref{eq:simplicity2} simplifies exposition but is not crucial to the analysis. This assumption can be avoided  by using the \emph{$C$-additive approximation} theory in \cite{sno-text}.}   For example, 
\eqref{eq:simplicity1}-\eqref{eq:simplicity2}  is guaranteed to have a maximizing solution $\alpha^* = (\alpha_1^*, \ldots, \alpha_N^*)$ when all sets $\script{A}_i$ are finite.  It is also guaranteed to have a well defined maximizer when the sets $\script{A}_i$ are infinite but are 
compact subsets of a finite-dimensional vector space,  and when the utility functions $\hat{u}_i(\alpha, \omega)$ are continuous in $\alpha$ for all $\omega \in \Omega$.  

The utility functions are otherwise arbitrary.  In particular, they are not required to have 
convexity or concavity properties. 

\subsection{Baseline actions and the game manager} 

Every round $t$, each player $i$ observes $\omega_j(t)$ for all $j \in \script{S}_i$ and then 
makes a \emph{baseline decision} $b_i(t) \in \script{A}_i$.  The resulting sequence $\{b_i(t)\}_{t=0}^{\infty}$ can be arbitrary and has no assumed structure. 
However, the understanding is that the baseline decision on round $t$ is an action that player $i$ would want to take if
it did not have access to shared information or to suggestions of a game manager.  It can be based on 
the player $i$ observations of current and past events. 

At the beginning of each round $t$, all players $i \in \script{N}$ privately 
upload their observations $\omega_j(t)$  (for all $j \in \script{S}_i$) and their baseline decisions $b_i(t)$ to the game manager.  Thus, on round $t$, the manager knows the full $\omega(t)$ vector, all of the baseline decisions $b_i(t)$,
and the complete history of past events.  It is assumed that the manager has only \emph{causal knowledge}, and hence it does not know the future values $\omega(\tau)$ and $b_i(\tau)$ for $\tau>t$.  
The manager uses its information on round $t$ to compute \emph{suggested actions} $\tilde{\alpha}_i(t) \in \script{A}_i$ that it delivers to each individual player.   Define $\tilde{\alpha}(t) = (\tilde{\alpha}_1(t), \ldots, \tilde{\alpha}_N(t))$ and $b(t) = (b_1(t), \ldots, b_N(t))$. 

\subsection{An (overly?) ambitious optimization problem} 

Define $u_i(t)$ and $x_i(t)$ for each  $i \in \script{N}$ and each $t \in \{0, 1, 2, \ldots\}$  by: 
\begin{eqnarray*}
u_i(t) &=& \hat{u}_i(\tilde{\alpha}(t), \omega(t)) \\
x_i(t) &=& \hat{u}_i(b(t), \omega(t)) 
\end{eqnarray*}
The value $u_i(t)$ is the utility earned by player $i$ on round $t$ if all players choose the suggested actions, while $x_i(t)$ is the corresponding utility if all players choose their baseline decisions. 
Define time averages for all $t>0$ by: 
\begin{eqnarray*}
\overline{u}_i(t) &=& \frac{1}{t}\sum_{\tau=0}^{t-1} \hat{u}_i(\tilde{\alpha}(t), \omega(t)) \\
\overline{x}_i(t) &=& \frac{1}{t}\sum_{\tau=0}^{t-1} \hat{u}_i(b(t), \omega(t)) 
\end{eqnarray*} 
It is useful to introduce an ambitious optimization problem that will be modified later. 
Subject to the emerging $\omega(t)$ and $b(t)$ sequences, the goal is for the manager to make suggestions $\tilde{\alpha}(t)$ that solve the following: 
\begin{eqnarray}
\mbox{Maximize:} & \liminf_{t\rightarrow\infty} \phi(\overline{u}_1(t), \ldots, \overline{u}_N(t)) \label{eq:p10} \\
\mbox{Subject to:} & \liminf_{t\rightarrow\infty} [\overline{u}_i(t) - \overline{x}_i(t)] \geq 0 \: \: \forall i \in \script{N} \label{eq:p20} \\
& \tilde{\alpha}_i(t) \in \script{A}_i \: \: \forall i \in \script{N}, \forall t \in \{0, 1,2, \ldots\} \label{eq:p30} 
\end{eqnarray}
where $\phi(u_1, \ldots, u_N)$ is a continuous and concave  function defined over the hyper-rectangle of all 
$(u_1, \ldots, u_N) \in \prod [0, u_i^{max}]$.  The constraint \eqref{eq:p20} ensures the time average utility player $i$ receives if all players use the suggestions  of the manager is at least as good as the utility it would receive  
if all players used their baseline strategies.  

The \emph{definition} of optimality for  \eqref{eq:p10}-\eqref{eq:p30} requires a more careful treatment, and this issue is discussed  more precisely in the next subsection. 
In particular, the \emph{causality constraint} of the game manager does not explicitly appear anywhere in 
\eqref{eq:p10}-\eqref{eq:p30}. 
Regardless, it can be shown that the above problem is always \emph{feasible}, so that it is always possible 
to satisfy constraints \eqref{eq:p20}-\eqref{eq:p30} via a simple causal algorithm for game manager decisions: Consider the trivial decisions  $(\tilde{\alpha}_1(t), \ldots, \tilde{\alpha}_N(t)) = (b_1(t), \ldots, b_N(t))$ for all $t\in \{0, 1,2, \ldots\}$.  This means that the game manager suggests nothing more than the baseline actions for all rounds $t$.  These suggestions are implementable in a causal manner because they only use the baseline decisions given to the game manager at the beginning of each round. Since $b_i(t) \in \script{A}_i$ for all $i$ and all $t$, one has  $\tilde{\alpha}_i(t) = b_i(t) \in \script{A}_i$ for all $i$, and so  the constraints  \eqref{eq:p30} are satisfied. Further, this trivial suggestion strategy immediately  implies 
$u_i(t) = x_i(t)$ for all $t$, and so constraint \eqref{eq:p20} is trivially satisfied. 

Thus, it is \emph{always possible} for a game manager to achieve the constraints \eqref{eq:p20}-\eqref{eq:p30}. 
The constraints \eqref{eq:p20} provide rational players an incentive to use the suggested actions, since it ensures the resulting time average utilities are at least as good as those of the baseline decisions.  Thus, it is assumed throughout this paper  
that all players choose the suggestions of the manager, so that $\tilde{\alpha}_i(t) = \alpha_i(t)$ for all $i$ and all $t$.  In particular, one has for all rounds $t$:  
\begin{eqnarray}
u_i(t) &=& \hat{u}_i(\alpha(t), \omega(t)) \label{eq:ui} \\
x_i(t) &=& \hat{u}_i(b(t), \omega(t)) \label{eq:xi} 
\end{eqnarray}

Define: 
\begin{eqnarray*}
u(t) &=& (u_1(t), \ldots, u_N(t)) \\
\overline{u}(t) &=& (\overline{u}_1(t), \ldots, \overline{u}_N(t)) 
\end{eqnarray*}
Under the assumption $\tilde{\alpha}_i(t)=\alpha_i(t)$ for all $i$ and all $t$, the problem becomes:
\begin{eqnarray}
\mbox{Maximize:} & \liminf_{t\rightarrow\infty} \phi(\overline{u}(t)) \label{eq:p1} \\
\mbox{Subject to:} & \liminf_{t\rightarrow\infty} [\overline{u}_i(t) - \overline{x}_i(t)] \geq 0 \: \: \forall i \in \script{N} \label{eq:p2} \\
& \alpha_i(t) \in \script{A}_i \: \: \forall i \in \script{N}, \forall t \in \{0, 1,2, \ldots\} \label{eq:p3} 
\end{eqnarray} 
where $u_i(t)$ and $x_i(t)$ are defined in \eqref{eq:ui} and \eqref{eq:xi}. 
The problem \eqref{eq:p1}-\eqref{eq:p3} shall be referred to as the \emph{infinite future knowledge optimization problem}.

\subsection{A modified objective} 

The infinite sequences $\omega(t)$ and $b(t)$ for $t \in \{0, 1,2, \ldots\}$ are arbitrary.  In practice, these sequences might depend on previous suggestions of the game manager.  However, the problem \eqref{eq:p1}-\eqref{eq:p3} does not specify how future values of $\omega(\tau)$ and $b(\tau)$ depend on control decisions. 
  Thus, to understand optimal utility  in \eqref{eq:p1}-\eqref{eq:p3}, it useful to view
 $\{\omega(t)\}_{t=0}^{\infty}$ and $\{b(t)\}_{t=0}^{\infty}$ as  arbitrary sequences 
  that are defined at the start of the game but with values that are only sequentially revealed as the game progresses.  In this way, future values of $\omega(\tau)$ and $b(\tau)$ are not influenced by the emerging decisions of the manager.

  With this structure, optimality
 of \eqref{eq:p1}-\eqref{eq:p3}  
   is defined over all possible action sequences $\{\alpha(t)\}_{t=0}^{\infty}$ that provide utilities with respect to the given $\{\omega(t)\}_{t=0}^{\infty}$ and $\{b(t)\}_{t=0}^{\infty}$ sequences. 
   In principle, the supremum time average utility in \eqref{eq:p1}  can 
   be computed offline based on non-causal 
   knowledge of the full sequences $\{\omega(t)\}_{t=0}^{\infty}$ and $\{b(t)\}_{t=0}^{\infty}$.
 Since optimality is defined in terms of full knowledge of the future, the problem   \eqref{eq:p1}-\eqref{eq:p3}  is called the \emph{infinite future knowledge optimization problem}. 
It is not clear if the supremum objective function value for this problem 
can be achieved by a practical algorithm that makes causal decisions.  The stochastic optimization theory in \cite{sno-text} shows that the supremum \emph{can} be achieved in a causal manner in the special case when the sequences $\{\omega(t)\}_{t=0}^{\infty}$ and $\{b(t)\}_{t=0}^{\infty}$ have an \emph{ergodic} structure.\footnote{Specifically, \cite{sno-text} shows optimality can be achieved (arbitrarily closely) in the case when the random event process is modulated by a finite state irreducible (possibly periodic) discrete time Markov chain.}  That is because, in the ergodic case,  the problem \eqref{eq:p1}-\eqref{eq:p3} fits into the general framework of \cite{sno-text}. 

However, this paper considers problems with possibly non-ergodic input sequences $\{\omega(t)\}_{t=0}^{\infty}$ and $\{b(t)\}_{t=0}^{\infty}$.  In this context, it is not clear if 
the supremum in \eqref{eq:p1}-\eqref{eq:p3} can be causally achieved. Remarkably, this paper shows that a modified objective, defined in terms of a finite but arbitrarily large window of time in which the future is known, \emph{can in fact be causally achieved} (to within any arbitrarily small but positive error).  This is done using the \emph{$T$-slot lookahead utility} developed in \cite{sno-text}:  Fix $T$ as a positive integer and partition the rounds $t \in \{0, 1,2, \ldots\}$ into successive frames of size $T$, so that frame $k$
consists of rounds $\{kT, \ldots, (k+1)T-1\}$ (for each $k \in \{0, 1,2, \ldots\}$).  For each round $k$ and for given 
realizations of 
$\{\omega(t)\}_{t=0}^{\infty}$ and $\{b(t)\}_{t=0}^{\infty}$, define 
$\psi_T[k]$ as the supremum objective value in the following optimization problem, optimized over all choices of the decision vector $\alpha(t) = (\alpha_1(t), \ldots, \alpha_N(t))$: 
\begin{eqnarray}
\mbox{Max:} & \phi\left(\gamma_1, \ldots, \gamma_N\right) \label{eq:tslot-1} \\
\mbox{Subj to:} & \gamma_i = \frac{1}{T}\sum_{\tau=kT}^{(k+1)T-1} u_i\left(\alpha(\tau), \omega(\tau)\right) \: \: \forall i \in \script{N} \label{eq:tslot-2} \\
& \gamma_i \geq \frac{1}{T}\sum_{\tau=kT}^{(k+1)T-1}u_i(b(\tau), \omega(\tau)) \: \: \forall i \in \script{N} \label{eq:tslot-3} \\
& \alpha_i(\tau) \in \script{A}_i \: \: \forall \tau \in \{kT, \ldots, (k+1)T-1\} \label{eq:tslot-4} \\
& \gamma_i \in [0, u_i^{max}] \: \: \forall i \in \script{N} \label{eq:tslot-5}  
\end{eqnarray}
In particular, if $(\gamma_1^*, \ldots, \gamma_N^*)$ and $\{\alpha^*(\tau)\}_{\tau=kT}^{(k+1)T-1}$ forms an optimal solution to the above problem, then $\psi_T[k] = \phi(\gamma_1^*, \ldots, \gamma_N^*)$.  The above problem is always feasible with a well defined and finite supremum $\psi_T[k]$.  In general, the supremum may not be achievable.  However, for all $\epsilon>0$, there are vectors $(\gamma_1^*, \ldots, \gamma_N^*)$ and $\{\alpha^*(\tau)\}_{\tau=kT}^{(k+1)T-1}$ that satisfy the constraints of the above problem and that satisfy: 
\[ \psi_T[k] - \epsilon \leq \phi(\gamma_1^*, \ldots, \gamma_N^*) \leq \psi_T[k] \]

The value $\psi_T[k]$ represents the maximum average utility achievable over frame $k$ provided that the regret constraints are satisfied by averages over that frame, and assuming that future values of the vectors 
$b(t)$ and $\omega(t)$ are fully known over the frame.  Since $\psi_T[k]$ requires knowledge of the future to compute, it seems unlikely that a practical algorithm could achieve performance that is competitive with the $\psi_T[k]$ values.  Remarkably, 
this paper develops a \emph{causal} algorithm (without requiring knowledge of the future) that satisfies the 
constraints \eqref{eq:p2}-\eqref{eq:p3} and that achieves: 
\[ \liminf_{K\rightarrow\infty} \left[\overline{u}_i(KT) - \frac{1}{K}\sum_{k=0}^{K-1}\psi_T[k]\right] \geq -BT/V    \]
where $B$ is a constant and $V$ is a parameter that can be chosen as large as desired (with a  tradeoff in the corresponding convergence time).  
This holds under the same algorithm  
\emph{for all possible (finite) values of $T$}.    In particular, for any desired value of $T$, it is possible to choose a sufficiently large value of $V$ so that long term performance is arbitrarily close to the average of the $\psi_T[k]$ values. 

It can be shown that, for any $T$ and for any $\{\omega(t)\}_{t=0}^{\infty}$ and $\{b(t)\}_{t=0}^{\infty}$ sequences, the $\liminf$ average of $\psi_T[k]$ is less than or equal 
the optimal objective function value in \eqref{eq:p1}  for the infinite future knowledge optimization problem.  In cases when sequences $\{\omega(t)\}_{t=0}^{\infty}$ and $\{b(t)\}_{t=0}^{\infty}$ are ergodic and mild additional assumptions are satisfied,  the limiting average of $\psi_T[k]$ converges to this value 
as $T\rightarrow\infty$. 
However, this is not true in general non-ergodic situations. In particular, example sequences can be given for which the optimal value in \eqref{eq:p1} is strictly larger than the following value: 
\[ \liminf_{T\rightarrow\infty} \left[  \liminf_{K\rightarrow\infty} \frac{1}{K}\sum_{k=0}^{K-1}\psi_T[k]\right]  \]
Nevertheless, the average of $\psi_T[k]$  still provides a meaningful 
and challenging utility target. 

\section{Weighted sum of utilities} 

This subsection considers a weighted sum of utilities, so that: 
\[ \phi(u_1, \ldots, u_N) = \sum_{i=1}^N\theta_iu_i \]
for given real numbers $\theta_i$.    Subsection \ref{section:convex} considers the more general 
case when $\phi(\gamma_1, \ldots, \gamma_N)$ is any concave function. 

\subsection{Virtual queues} 

To achieve the time average constraint \eqref{eq:p2}, for each $i \in \script{N}$ define a \emph{virtual queue} $Q_i(t)$ that is initialized to $Q_i(0)=0$ and that has update equation: 
\begin{equation} \label{eq:q-update} 
Q_i(t+1) = \max[Q_i(t) + x_i(t) - u_i(t), 0] 
\end{equation} 
where $u_i(t)=\hat{u}_i(\alpha(t), \omega(t))$ and $x_i(t)=\hat{u}(\alpha(t), \omega(t))$.  These virtual queues are kept in the game manager and updated at the end of every round $t$ based on its suggestion $\alpha(t)$ and on 
knowledge of the $\omega(t)$ vector.  Recall that the manager knows all entries of $\omega(t)$ since each player $i$ tells it the values of 
$\omega_j(t)$ for $j \in \script{S}_i$, and $\cup_{i\in\script{N}} \script{S}_i = \{1, \ldots, M\}$. 

The following queueing lemma is standard for stochastic network optimization \cite{sno-text}. The proof is given for completeness. 

\begin{lem} \label{lem:virtual-queues}  (Virtual queues \cite{sno-text}) If $Q_i(t)$ satisfies \eqref{eq:q-update} then: 

a)  For all $t \in \{1, 2, 3, \ldots\}$ one has: 

\[ \overline{u}_i(t) \geq \overline{x}_i(t) - \left[\frac{Q_i(t)-Q_i(0)}{t}\right] \]

b) If $\lim_{t\rightarrow\infty} Q_i(t)/t=0$ for all $i \in \script{N}$, then constraints \eqref{eq:p2} are satisfied. 

\end{lem}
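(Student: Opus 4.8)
The plan is to argue entirely on a fixed sample path, exploiting the fact that the maximum in the queue update can only raise the queue value. For part (a), the first step is to discard the maximum: since $\max[y,0] \ge y$ for every real $y$, the update \eqref{eq:q-update} gives $Q_i(t+1) \ge Q_i(t) + x_i(t) - u_i(t)$, which rearranges to $u_i(t) - x_i(t) \ge Q_i(t) - Q_i(t+1)$. The next step is to sum this inequality over $\tau \in \{0, 1, \ldots, t-1\}$; the right-hand side telescopes to $Q_i(0) - Q_i(t)$, so $\sum_{\tau=0}^{t-1}[u_i(\tau) - x_i(\tau)] \ge Q_i(0) - Q_i(t)$. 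Dividing by $t$ and invoking the definitions of $\overline{u}_i(t)$ and $\overline{x}_i(t)$ yields $\overline{u}_i(t) - \overline{x}_i(t) \ge -[Q_i(t) - Q_i(0)]/t$, which is precisely the claimed bound.

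For part (b), I would simply pass to the limit in the bound just established. Since $Q_i(0) = 0$ by the stated initialization, the bound reads $\overline{u}_i(t) - \overline{x}_i(t) \ge -Q_i(t)/t$. Taking $\liminf$ as $t \to \infty$ on both sides and using the hypothesis $\lim_{t\to\infty} Q_i(t)/t = 0$ gives $\liminf_{t\to\infty}[\overline{u}_i(t) - \overline{x}_i(t)] \ge 0$ for every $i \in \script{N}$, which is exactly the constraint \eqref{eq:p2}.

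There is no genuine obstacle here: the statement is deterministic and holds sample path by sample path, and the only points requiring care are getting the direction of the inequality right when the maximum is dropped and tracking that the telescoped sum leaves behind both the initial and the final queue values. Note that neither the nonnegativity of $u_i(t)$ and $x_i(t)$ nor the finiteness of $u_i^{max}$ is used in this particular lemma, although those assumptions are needed elsewhere. The intuition worth recording is that $Q_i(t)$ accumulates the running deficit $\sum_{\tau}[x_i(\tau) - u_i(\tau)]$ (clipped at zero), so a sublinearly growing queue forces the running average of the suggested utility to catch up with that of the baseline, which is exactly the regret-type guarantee \eqref{eq:p2} demands.
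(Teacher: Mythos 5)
Your proof is correct and follows essentially the same route as the paper: drop the $\max[\cdot,0]$ via $\max[y,0]\ge y$, telescope the resulting one-sided recursion, divide by $t$, and pass to the limit for part (b). The only cosmetic difference is that you invoke $Q_i(0)=0$ in part (b), which is not needed since $Q_i(0)/t\to 0$ for any fixed initial value; the paper's argument implicitly covers this.
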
 
\begin{proof} 
From \eqref{eq:q-update} one has for all $\tau \in \{0, 1, 2, \ldots\}$: 
\[ Q_i(\tau+1) \geq Q_i(\tau) + x_i(\tau) - u_i(\tau) \]
Thus: 
\[ Q_i(\tau+1) - Q_i(\tau) \geq x_i(\tau) - u_i(\tau) \]
Fix $t>0$. Summing the above over $\tau \in \{0, 1, \ldots, t-1\}$ gives: 
\[ Q_i(t)-Q_i(0) \geq \sum_{\tau=0}^{t-1} [x_i(\tau) - u_i(\tau)] \]
Dividing the result by $t$ and rearranging terms gives the result of part (a).  Part (b) immediately follows. 
\end{proof} 

A queue $Q_i(t)$ that satisfies $\lim_{t\rightarrow\infty} Q_i(t)/t = 0$ with probability 1 is said to be \emph{rate stable}.

\subsection{Drift-plus-penalty}

Define $Q(t) = (Q_1(t), \ldots, Q_N(t))$ and define $\norm{Q(t)}^2 = \sum_{i\in\script{N}} Q_i(t)^2$. 
Define $L(t) = \frac{1}{2}\norm{Q(t)}^2$, called a \emph{Lyapunov function}.  Define $\Delta(t) = L(t+1)-L(t)$.  The \emph{drift-plus-penalty} method of \cite{sno-text} observes $Q(t)$, $\omega(t)$, and $b(t)$ every round $t$, and then takes a control action $\alpha(t)$ to minimize a bound on the following expression: 
\[ \Delta(t) -V\sum_{i=1}^N\theta_iu_i(t) \]
where $V$ is a non-negative parameter that affects a performance tradeoff. 
The intuition is as follows:  Including the drift term $\Delta(t)$ in the above minimization maintains stable queues so that the time average constraints can be satisfied.  Including the penalty term $-V\sum_{i=1}^N\theta_iu_i(t)$ encourages the controller to make decisions that give a desirable weighted sum of utilities. 
Using larger values of $V$ places more emphasis on this ``penalty minimization.''  

\begin{lem} \label{lem:compute} Under any algorithm for choosing $\alpha(t) \in \script{A}_1\times \cdots \times \script{A}_N$, one has for all $t \in \{0, 1, 2, \ldots\}$: 
\begin{align} 
&\Delta(t) - V\sum_{i=1}^N\theta_iu_i(t) \nonumber \\
&\leq  B  - V\sum_{i=1}^N\theta_i\hat{u}_i(\alpha(t), \omega(t)) \nonumber \\
& + \sum_{i=1}^NQ_i(t)\left[\hat{u}_i(b(t),\omega(t)) - \hat{u}_i(\alpha(t),\omega(t))\right] \label{eq:DPP} 
\end{align} 
where $B = \frac{1}{2}\sum_{i=1}^N(u_i^{max})^2$. 
\end{lem}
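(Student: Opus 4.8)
The plan is to follow the standard drift bounding technique from Lyapunov optimization, using only the queue update \eqref{eq:q-update} and the utility bounds $0 \leq \hat{u}_i(\alpha,\omega) \leq u_i^{max}$. First I would expand the drift directly from the definition $L(t) = \frac{1}{2}\norm{Q(t)}^2$:
\[ \Delta(t) = \frac{1}{2}\sum_{i=1}^N\left[Q_i(t+1)^2 - Q_i(t)^2\right] . \]
The key elementary step is the squaring inequality for the nonnegative-truncated update: for any real numbers $Q \geq 0$ and $a$, the relation $Q' = \max[Q+a,0]$ implies $Q'^2 \leq (Q+a)^2 = Q^2 + 2Qa + a^2$. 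Applying this with $Q = Q_i(t)$ and $a = x_i(t) - u_i(t)$ gives, for each $i$,
\[ Q_i(t+1)^2 - Q_i(t)^2 \leq (x_i(t)-u_i(t))^2 + 2Q_i(t)(x_i(t)-u_i(t)) . \]

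Next I would bound the ``second moment'' term. Since both $x_i(t) = \hat{u}_i(b(t),\omega(t))$ and $u_i(t) = \hat{u}_i(\alpha(t),\omega(t))$ lie in $[0,u_i^{max}]$, their difference satisfies $(x_i(t)-u_i(t))^2 \leq (u_i^{max})^2$ (the maximum of $(a-b)^2$ over the square $[0,u_i^{max}]^2$ occurs at a corner). Summing over $i$ and dividing by $2$ collects exactly the constant $B = \frac{1}{2}\sum_{i=1}^N(u_i^{max})^2$, so
\[ \Delta(t) \leq B + \sum_{i=1}^N Q_i(t)\left(x_i(t) - u_i(t)\right) . \]
Subtracting $V\sum_{i=1}^N\theta_i u_i(t)$ from both sides and substituting $u_i(t) = \hat{u}_i(\alpha(t),\omega(t))$ and $x_i(t) = \hat{u}_i(b(t),\omega(t))$ yields the claimed inequality \eqref{eq:DPP}. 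Note the argument uses no property of the algorithm generating $\alpha(t)$, which is why the bound holds ``under any algorithm.''

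There is no real obstacle here — every step is routine — but the one place to be careful is the bound $(x_i(t)-u_i(t))^2 \leq (u_i^{max})^2$: it relies on both the nonnegativity and the upper bound of the utility functions, and it is tempting (but wrong) to instead try to bound each of $x_i(t)^2$ and $u_i(t)^2$ separately, which would give $2(u_i^{max})^2$ and a worse constant. Using the joint bound on the difference is what produces the sharp constant $B = \frac{1}{2}\sum_i (u_i^{max})^2$ used throughout the rest of the paper.
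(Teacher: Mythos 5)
Your proposal is correct and follows essentially the same route as the paper's proof: square the truncated queue update via $\max[y,0]^2 \leq y^2$, bound $(x_i(t)-u_i(t))^2 \leq (u_i^{max})^2$ using that both utilities lie in $[0,u_i^{max}]$, sum over $i$, divide by $2$ to obtain $B$, and then subtract the penalty term. No gaps.
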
 
\begin{proof} 
Squaring \eqref{eq:q-update} and using $\max[y,0]^2 \leq y^2$ gives: 
\[ Q_i(t+1)^2 \leq Q_i(t)^2 + (x_i(t)-u_i(t))^2 + 2Q_i(t)[x_i(t)-u_i(t)] \]
Summing over $i \in \{1, \ldots, N\}$ and dividing by $2$ gives: 
\begin{align*} 
\Delta(t) &\leq \frac{1}{2}\sum_{i=1}^N(x_i(t)-u_i(t))^2 + \sum_{i=1}^NQ_i(t)[x_i(t)-u_i(t)] \\
&\leq B +  \sum_{i=1}^NQ_i(t)[x_i(t)-u_i(t)] 
\end{align*}
 where the last inequality follows because $(x_i(t)-u_i(t)) \in [-u_i^{max}, u_i^{max}]$ for all $t$. 
 Subtracting $V\sum_{i=1}^N\theta_iu_i(t)$ from both sides and using $u_i(t) = \hat{u}_i(\alpha(t), \omega(t))$ and $x_i(t) = \hat{u}_i(b(t), \omega(t))$  gives the result. 
\end{proof} 

The algorithm takes actions every round $t$ to greedily minimize the right-hand-side of the drift-plus-penalty expression \eqref{eq:DPP}.  The only terms on the right-hand-side of \eqref{eq:DPP}  that are affected by control decisions $\alpha(t)$ are: 
\[ -\sum_{i=1}^NV\theta_i\hat{u}_i(\alpha(t), \omega(t)) - \sum_{i=1}^NQ_i(t)\hat{u}_i(\alpha(t), \omega(t)) \]
The resulting algorithm is as follows:  Initialize $Q_i(0)=0$ for $i \in\script{N}$.  Every round $t \in \{0, 1, 2, \ldots\}$, the game manager observes vectors $Q(t)$, $\omega(t)$, and $b(t)$ and does the following: 

\begin{itemize} 
\item (Decisions) Choose suggestion vector $\alpha(t) = (\alpha_1(t), \ldots, \alpha_N(t))$ as the solution to the following: 
\begin{eqnarray*} 
\mbox{Maximize:} &
\sum_{i=1}^N\hat{u}_i(\alpha(t),\omega(t))[V\theta_i+ Q_i(t) ] \nonumber \\
\mbox{Subject to:} & \alpha_i(t) \in \script{A}_i \: \: \forall i \in \script{N}  \nonumber 
\end{eqnarray*} 

\item (Send suggestions) Send suggestions $\alpha_i(t)$ to each player $i \in \script{N}$. 

\item (Queue update) For each $i \in \script{N}$, update $Q_i(t)$ via \eqref{eq:q-update}. 
\end{itemize} 

It is clear that the above algorithm is causal:  It only requires knowledge of the current $Q(t), \omega(t), b(t)$ and  does not require knowledge of the future. 

\subsection{Performance for weighted utilities} 

The following theorem shows that the algorithm satisfies constraints \eqref{eq:p2}-\eqref{eq:p3} with constraint violations that decay like $O(\sqrt{V/t})$.

\begin{thm} \label{thm:constraints} (Constraint satisfaction) Fix $V\geq 0$ and assume the algorithm in the previous subsection is used with this $V$ and with $Q_i(0)=0$ for all $i\in\script{N}$. 
For all $t \in \{1, 2, 3, \ldots\}$ one has: 

a) $\frac{\norm{Q(t)}}{t} \leq \sqrt{\frac{2B + 2V\sum_{i=1}^N|\theta_i|u_i^{max}}{t}}$. 

b) $\overline{u}_i(t) - \overline{x}_i(t) \geq -\sqrt{\frac{2B + 2V\sum_{i=1}^N|\theta_i|u_i^{max}}{t}}$.

c) The constraints \eqref{eq:p2}-\eqref{eq:p3} are satisfied. 
\end{thm}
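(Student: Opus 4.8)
The plan is to run a standard Lyapunov drift argument, exploiting the fact that the algorithm greedily minimizes the right-hand side of the drift-plus-penalty bound \eqref{eq:DPP} and comparing its value against the particular (generally suboptimal) choice $\alpha(t) = b(t)$.

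For part (a), I would first note that maximizing $\sum_{i=1}^N \hat{u}_i(\alpha(t),\omega(t))[V\theta_i + Q_i(t)]$ over $\alpha(t) \in \script{A}_1\times\cdots\times\script{A}_N$, as the algorithm does, is exactly equivalent to minimizing the only control-dependent terms on the right-hand side of \eqref{eq:DPP}. Hence, for the algorithm's action, the right-hand side of \eqref{eq:DPP} is no larger than the same expression evaluated at any feasible action, in particular at $\alpha(t) = b(t)$. Substituting $\alpha(t) = b(t)$ makes each bracket $[\hat{u}_i(b(t),\omega(t)) - \hat{u}_i(\alpha(t),\omega(t))]$ equal to zero, so Lemma~\ref{lem:compute} gives
\[ \Delta(t) - V\sum_{i=1}^N\theta_i u_i(t) \leq B - V\sum_{i=1}^N\theta_i \hat{u}_i(b(t),\omega(t)). \]
Adding $V\sum_{i=1}^N\theta_i u_i(t) = V\sum_{i=1}^N\theta_i \hat{u}_i(\alpha(t),\omega(t))$ to both sides and then using $\hat{u}_i(\cdot,\cdot)\in[0,u_i^{max}]$ to bound $\theta_i[\hat{u}_i(\alpha(t),\omega(t)) - \hat{u}_i(b(t),\omega(t))] \leq |\theta_i|u_i^{max}$ yields the deterministic drift bound $\Delta(t) \leq B + V\sum_{i=1}^N|\theta_i|u_i^{max}$. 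Summing this telescoping inequality over $\tau\in\{0,\ldots,t-1\}$, using $L(0)=0$ (since $Q_i(0)=0$) and $L(t) = \tfrac12\norm{Q(t)}^2 \geq 0$, gives $\tfrac12\norm{Q(t)}^2 \leq t\bigl(B + V\sum_{i=1}^N|\theta_i|u_i^{max}\bigr)$, and dividing through by $t^2$ and taking square roots gives part (a).

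Part (b) then follows by combining part (a) with Lemma~\ref{lem:virtual-queues}(a): since $Q_i(t) \leq \norm{Q(t)}$ and $Q_i(0)=0$, Lemma~\ref{lem:virtual-queues}(a) gives $\overline{u}_i(t) - \overline{x}_i(t) \geq -Q_i(t)/t \geq -\norm{Q(t)}/t$, into which I substitute the bound from part (a). For part (c), the constraints \eqref{eq:p3} hold by inspection because the algorithm always selects $\alpha_i(t)\in\script{A}_i$; and for \eqref{eq:p2}, the bound in part (b) tends to $0$ as $t\to\infty$, so $\liminf_{t\to\infty}[\overline{u}_i(t)-\overline{x}_i(t)] \geq 0$ for every $i\in\script{N}$ (equivalently, part (a) shows $Q_i(t)/t\to 0$, so the virtual queues are rate stable and Lemma~\ref{lem:virtual-queues}(b) applies).

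I do not anticipate a real obstacle here; the argument is routine once the right comparison action is chosen. The only non-mechanical point is that, unlike the usual drift-plus-penalty analysis which compares against an optimal (or $T$-slot lookahead) policy in order to bound the \emph{utility} objective, the constraint/regret guarantee comes out of comparing against the baseline action $b(t)$, which is always feasible and annihilates the queue-weighted term in \eqref{eq:DPP}. This is precisely what makes the bound hold uniformly for all $V\geq 0$ and all sample paths $\{\omega(t)\}$, $\{b(t)\}$.
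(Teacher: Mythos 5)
Your proposal is correct and follows essentially the same argument as the paper: compare the greedy minimization of the right-hand side of \eqref{eq:DPP} against the feasible choice $\alpha(t)=b(t)$, deduce the per-round drift bound $\Delta(t)\leq B+V\sum_{i=1}^N|\theta_i|u_i^{max}$, telescope, and then invoke Lemma~\ref{lem:virtual-queues}. The only cosmetic difference is that you re-add the penalty term and bound the utility difference directly, whereas the paper rearranges to the same constant $C=\sum_{i=1}^N|\theta_i|u_i^{max}$; the resulting bounds are identical.
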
 

\begin{proof} 
Fix $\tau \in \{0, 1,2 , \ldots\}$. 
Since the algorithm makes decisions for $\alpha(\tau)$ to minimize the right-hand-side of \eqref{eq:DPP}, one has: 
\begin{align} 
&\Delta(\tau) - V\sum_{i=1}^N\theta_iu_i(\tau) \nonumber \\
&\leq B - V\sum_{i=1}^N\theta_i\hat{u}_i(\alpha^*(\tau), \omega(\tau)) \nonumber \\
& + \sum_{i=1}^N Q_i(\tau)[\hat{u}_i(b(\tau), \omega(\tau)) - \hat{u}_i(\alpha^*(\tau), \omega(\tau))] \label{eq:plug} 
\end{align} 
where $\alpha^*(\tau)=(\alpha_1^*(\tau), \ldots, \alpha_N^*(\tau))$ is any alternative vector that
satisfies $\alpha_i^*(\tau) \in \script{A}_i$ for all $i \in \script{N}$.   A valid choice is $\alpha^*(\tau)=b(\tau)$. Substituting $\alpha^*(\tau)=b(\tau)$ into the right-hand-side of \eqref{eq:plug} gives:  
\[ \Delta(\tau) - V\sum_{i=1}^N\theta_iu_i(\tau) \leq B - V\sum_{i=1}^N\theta_i\hat{u}_i(b(\tau),\omega(\tau)) \]
Rearranging terms gives: 
\begin{equation} \label{eq:BVC} 
 \Delta(\tau) \leq B + VC 
 \end{equation} 
where $C = \sum_{i=1}^N|\theta_i|u_i^{max}$. The above holds for all rounds $\tau \in \{0, 1, 2, \ldots\}$.  Fix $t>0$.  Summing \eqref{eq:BVC} over $\tau\in \{0, \ldots, t-1\}$ gives: 
\[ L(t)-L(0) \leq (B+VC)t \]
Since $L(t) = \frac{1}{2}\norm{Q(t)}^2$ and $L(0)=0$ one has: 
\[ \norm{Q(t)}^2 \leq 2(B+VC)t \]
Dividing by $t^2$ and taking square roots gives: 
\[ \frac{\norm{Q(t)}}{t} \leq \sqrt{\frac{2(B+VC)}{t}} \]
This proves part (a).  Part (b) follows from (a) together with 
Lemma \ref{lem:virtual-queues}.  Part (c) follows directly from (b). 
 \end{proof} 

The above theorem does not use a value $T$.  The value $T$ is also never used in the algorithm implementation.  It is only used 
in the performance theorem below.  

\begin{thm}\label{thm:performance}  (Performance) Fix $V\geq 0$ and assume the algorithm in the previous subsection is used
with this $V$ and with $Q_i(0)=0$ for all $i \in \script{N}$.   For all positive integers $T$ and $K$ the following holds: 
\[ \sum_{i=1}^N \theta_i\overline{u}_i(KT) \geq \frac{1}{K}\sum_{k=0}^{K-1} \psi_T[k] - \frac{TB}{V} \]
\end{thm}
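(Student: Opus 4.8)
The plan is to run a per-frame version of the drift-plus-penalty argument already used for Theorem \ref{thm:constraints}, using a near-optimal $T$-slot lookahead policy as the comparison action, and then telescope over the $K$ frames.

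First I would invoke exactly the inequality established in the proof of Theorem \ref{thm:constraints}: because the algorithm chooses $\alpha(\tau)$ to maximize $\sum_i \hat{u}_i(\alpha(\tau),\omega(\tau))[V\theta_i + Q_i(\tau)]$, the bound \eqref{eq:DPP} yields, for every round $\tau$ and every alternative vector $\alpha^*(\tau)$ with $\alpha^*_i(\tau)\in\script{A}_i$,
\[ \Delta(\tau) - V\sum_{i=1}^N\theta_i u_i(\tau) \leq B - V\sum_{i=1}^N\theta_i\hat{u}_i(\alpha^*(\tau),\omega(\tau)) + \sum_{i=1}^N Q_i(\tau)\big[\hat{u}_i(b(\tau),\omega(\tau)) - \hat{u}_i(\alpha^*(\tau),\omega(\tau))\big]. \]
Fix a frame $k$ and $\epsilon>0$, and let $\{\alpha^*(\tau)\}_{\tau=kT}^{(k+1)T-1}$ with $(\gamma_1^*,\ldots,\gamma_N^*)$ be a feasible solution of \eqref{eq:tslot-1}--\eqref{eq:tslot-5} for frame $k$ with objective at least $\psi_T[k]-\epsilon$. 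Summing the displayed inequality over $\tau\in\{kT,\ldots,(k+1)T-1\}$ with this comparison policy, the left side telescopes to $L((k+1)T)-L(kT) - V\sum_{\tau=kT}^{(k+1)T-1}\sum_i\theta_i u_i(\tau)$; by \eqref{eq:tslot-2} and $\phi(\gamma)=\sum_i\theta_i\gamma_i$, the penalty term sums to $-VT\sum_i\theta_i\gamma_i^* \le -VT(\psi_T[k]-\epsilon)$; and there remain the constant $BT$ and a queue term $\sum_{\tau=kT}^{(k+1)T-1}\sum_i Q_i(\tau)[\hat{u}_i(b(\tau),\omega(\tau)) - \hat{u}_i(\alpha^*(\tau),\omega(\tau))]$.

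Controlling this queue term is the heart of the argument, and the main obstacle, because the $T$-slot problem only constrains \emph{frame averages} of baseline versus comparison utilities, not per-round quantities. I would split $Q_i(\tau) = Q_i(kT) + (Q_i(\tau)-Q_i(kT))$: the piece carrying $Q_i(kT)$ sums to $Q_i(kT)\sum_{\tau=kT}^{(k+1)T-1}[\hat u_i(b(\tau),\omega(\tau)) - \hat u_i(\alpha^*(\tau),\omega(\tau))] \le 0$ by the regret constraint \eqref{eq:tslot-3}. For the remaining piece, \eqref{eq:q-update} and $0\le \hat u_i \le u_i^{max}$ give $|Q_i(\tau+1)-Q_i(\tau)|\le u_i^{max}$, hence $|Q_i(kT+j)-Q_i(kT)|\le j\,u_i^{max}$; together with $|\hat u_i(b(\tau),\omega(\tau)) - \hat u_i(\alpha^*(\tau),\omega(\tau))|\le u_i^{max}$ and $\sum_{j=0}^{T-1}j = T(T-1)/2$, this contributes at most $\sum_i \tfrac{T(T-1)}{2}(u_i^{max})^2 = T(T-1)B$. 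The convenient cancellation $BT + T(T-1)B = BT^2$ then produces the clean per-frame bound
\[ L((k+1)T) - L(kT) - V\sum_{\tau=kT}^{(k+1)T-1}\sum_{i=1}^N\theta_i u_i(\tau) \le BT^2 - VT\big(\psi_T[k]-\epsilon\big). \]

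Finally I would sum this over $k\in\{0,\ldots,K-1\}$: the Lyapunov terms telescope to $L(KT)-L(0)=L(KT)\ge 0$ (since $Q_i(0)=0$), leaving $-V\sum_{\tau=0}^{KT-1}\sum_i\theta_i u_i(\tau) \le BKT^2 - VT\sum_{k=0}^{K-1}(\psi_T[k]-\epsilon)$. Dividing by $VKT$ and using $\sum_i\theta_i\overline u_i(KT) = \tfrac{1}{KT}\sum_{\tau=0}^{KT-1}\sum_i\theta_i u_i(\tau)$ gives $\sum_i\theta_i\overline u_i(KT) \ge \tfrac1K\sum_{k=0}^{K-1}\psi_T[k] - \tfrac{BT}{V} - \epsilon$; letting $\epsilon\downarrow 0$ completes the proof. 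The only genuinely delicate point is the within-frame drift bound just described — bounding the per-round change of $Q_i$ crudely by $u_i^{max}$ at every step would cost $T^2B$ per frame and spoil the exact constant, whereas summing it as $T(T-1)/2$ is what makes it cancel against the $BT$ from Lemma \ref{lem:compute}.
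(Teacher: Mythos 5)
Your proposal is correct and follows essentially the same route as the paper's own proof: the same per-frame summation of the drift-plus-penalty inequality \eqref{eq:plug} against a near-optimal $T$-slot lookahead comparison policy, the same replacement of $Q_i(\tau)$ by $Q_i(kT)$ at a cost of $2B\sum_{m=0}^{T-1}m = T(T-1)B$ (which the paper states without the explicit $|Q_i(kT+j)-Q_i(kT)|\le j\,u_i^{max}$ justification you supply), the same use of constraint \eqref{eq:tslot-3} to kill the $Q_i(kT)$ term, and the same telescoping over frames followed by division by $VKT$. The only cosmetic difference is that you carry $\epsilon$ through the sum over $k$ before letting $\epsilon\downarrow 0$, whereas the paper takes the limit frame by frame.
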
 

Theorem \ref{thm:performance} shows that for any frame size $T$ and for any number of frames $K$, the weighted sum of utilities achieved by this algorithm over the first $T$ frames is at most $TB/V$ less than the average of the ideal $T$-slot lookahead values $\psi_T[k]$ over those frames.  Notice that this holds for all frame sizes $T$.  Since the algorithm does not use a value $T$ as input, the above theorem can be viewed as a class of performance bounds that are parameterized by $T$, all of which are satisfied.  
The error term 
$TB/V$ can be made as close to $0$ as desired by choosing $V$ appropriately large.  This is remarkable, particularly when $T$ is large, because the ideal $\psi_T[k]$ value is defined in terms of perfect knowledge of the future over $T$ rounds, whereas the algorithm does not know the future.  The tradeoff is that a large value of $V$ affects the \emph{convergence time} required to meet the desired constraints, as specified by part (b) of Theorem \ref{thm:constraints}. 

\begin{proof} (Theorem \ref{thm:performance}) 
Fix $k$ as a non-negative integer. Summing \eqref{eq:plug} over $\tau \in \{kT, \ldots, (k+1)T-1\}$ gives: 
\begin{align*} 
&\sum_{\tau=kT}^{(k+1)T-1}\Delta(\tau) - V\sum_{\tau=kT}^{(k+1)T-1}\sum_{i=1}^N\theta_iu_i(\tau) \\
&\leq TB - V\sum_{\tau=kT}^{(k+1)T-1}\sum_{i=1}^N\theta_i\hat{u}_i(\alpha^*(\tau), \omega(\tau)) \\
&+ \sum_{\tau=kT}^{(k+1)T-1}\sum_{i=1}^NQ_i(\tau)[\hat{u}_i(b(\tau),\omega(\tau)) - \hat{u}_i(\alpha^*(\tau), \omega(\tau))] \\
&\leq TB + 2B\sum_{m=0}^{T-1}m - V\sum_{\tau=kT}^{(k+1)T-1}\sum_{i=1}^N\theta_i\hat{u}_i(\alpha^*(\tau), \omega(\tau)) \\
&+ \sum_{\tau=kT}^{(k+1)T-1}\sum_{i=1}^NQ_i(kT)[\hat{u}_i(b(\tau),\omega(\tau)) - \hat{u}_i(\alpha^*(\tau), \omega(\tau))] \\
&= T^2B - VT\sum_{i=1}^N\theta_i\left[\frac{1}{T}\sum_{\tau=kT}^{(k+1)T-1}\hat{u}_i(\alpha^*(\tau), \omega(\tau)) \right] \\
&+ \sum_{\tau=kT}^{(k+1)T-1}\sum_{i=1}^NQ_i(kT)[\hat{u}_i(b(\tau),\omega(\tau)) - \hat{u}_i(\alpha^*(\tau), \omega(\tau))] \\
\end{align*} 
Now define $\gamma^*=(\gamma_1^*, \ldots, \gamma_N^*)$ by:  
\[ \gamma_i^* = \frac{1}{T}\sum_{\tau=kT}^{(k+1)T-1}\hat{u}_i(\alpha^*(\tau), \omega(\tau)) \: \: \forall i \in \script{N}  \]
Then: 
\begin{align*} 
&\sum_{\tau=kT}^{(k+1)T-1}\Delta(\tau) - V\sum_{\tau=kT}^{(k+1)T-1}\sum_{i=1}^N\theta_iu_i(\tau) \\
& \leq T^2B - VT\sum_{i=1}^N\theta_i\gamma_i^* \\
&+T\sum_{i=1}^NQ_k(kT)\left[\frac{1}{T}\sum_{\tau=kT}^{(k+1)T-1} \hat{u}_i(b(\tau),\omega(\tau)) - \gamma_i^*\right] 
\end{align*} 

Now fix $\epsilon>0$ and define $\alpha^*(\tau)$ for $\tau \in \{kT, \ldots, (k+1)T-1\}$ as decisions that 
satisfy the constraints \eqref{eq:tslot-2}-\eqref{eq:tslot-5} and yield: 
\[ \psi_T[k] - \epsilon \leq \sum_{i=1}^N \theta_i \gamma_i^* \leq \psi_T[k] \]
It follows that: 
\begin{align} 
&\sum_{\tau=kT}^{(k+1)T-1}\Delta(\tau) - V\sum_{\tau=kT}^{(k+1)T-1}\sum_{i=1}^N\theta_iu_i(\tau) \\
& \leq T^2B - VT\psi_T[k]  + \epsilon TV
\end{align} 
This holds for all $\epsilon>0$.  Taking a limit as $\epsilon\rightarrow 0$ gives: 
\begin{align*} 
&\sum_{\tau=kT}^{(k+1)T-1}\Delta(\tau) - V\sum_{\tau=kT}^{(k+1)T-1}\sum_{i=1}^N\theta_iu_i(\tau) \\
& \leq T^2B - VT\psi_T[k]  
\end{align*} 
Fix a positive integer $K$. 
Summing the above over $k \in \{0, 1, \ldots, K-1\}$ gives: 
\begin{align*}
&\sum_{\tau=0}^{KT-1}\Delta(\tau) - V\sum_{\tau=0}^{KT-1}\sum_{i=1}^N\theta_iu_i(\tau) \\
&\leq T^2BK - VT\sum_{k=0}^{K-1}\psi_T[k] 
\end{align*} 
That is: 
\begin{align*}
 &L(KT)  - L(0) -  VKT\sum_{i=1}^N\theta_i\overline{u}_i(KT) \\
 &\leq T^2BK - VKT\frac{1}{K}\sum_{k=0}^{K-1}\psi_T[k] 
 \end{align*}
Dividing by $VKT$ and using the fact that $L(0)=0$ and $L(KT)\geq 0$ gives: 
\[ - \sum_{i=1}^N\theta_i\overline{u}_i(KT) \leq \frac{TB}{V} - \frac{1}{K}\sum_{k=0}^{K-1}\psi_T[k] \]
Therefore: 
\[ \sum_{i=1}^N\theta_i\overline{u}_i(KT) \geq \frac{1}{K}\sum_{k=0}^{K-1} \psi_T[k] - \frac{TB}{V} \]
\end{proof} 

\subsection{Discussion} 

Theorems \ref{thm:constraints} and \ref{thm:performance} are \emph{deterministic results} that are guaranteed hold on every sample path, regardless of the probability model.  
Theorem \ref{thm:performance} holds for arbitrarily large values of $T$ and shows that, for large $K$, averages over $KT$ rounds give time average performance that is arbitrarily close to the value $\frac{1}{K}\sum_{k=0}^{K-1}\psi_T[k]$, where $\psi_T[k]$ is an ideal value based on knowledge of $T$ rounds into the future.  If the $\{b(t)\}_{t=0}^{\infty}$ and $\{\omega(t)\}_{t=0}^{\infty}$ processes are ergodic and mild additional assumptions are satisfied, then 
the value $\frac{1}{K}\sum_{k=0}^{K-1}\psi_T[k]$ approaches the ergodic supremum of the problem \eqref{eq:p1}-\eqref{eq:p3} as $T\rightarrow\infty$ and $K\rightarrow\infty$.  Intuitively, this is why the algorithm comes arbitrarily close to the solution of \eqref{eq:p1}-\eqref{eq:p3} in the ergodic case. 

However, the solution of  \eqref{eq:p1}-\eqref{eq:p3} requires perfect knowledge of \emph{all time into the future}, rather than just a finite horizon of $T$ slots into the future.  Thus, in the general 
non-ergodic case, the value 
$\frac{1}{K}\sum_{k=0}^{K-1}\psi_T[k]$ does not necessarily come close to the optimal objective function value in the problem
 \eqref{eq:p1}-\eqref{eq:p3}, regardless of how large the $T$ value is chosen to be.   Nevertheless, the value $\frac{1}{K}\sum_{k=0}^{K-1}\psi_T[k]$ is an intuitively ambitious target, and it is remarkable that, in all cases,  this target can be achieved (arbitrarily closely) by a causal algorithm.  
 

\section{Concave function of utilities} \label{section:convex} 

The previous section considered a weighted sum of utilities.  This section considers a 
general concave function of utilities.  Specifically, it uses a function $\phi(u_1, \ldots, u_N)$ that is concave and continuous over $(u_1, \ldots, u_N) \in \prod_{i=1}^N [0, u_i^{max}]$.  Thus, it treats the general problem  \eqref{eq:p1}-\eqref{eq:p3}.  For simplicity of exposition, assume the function $\phi(\cdot)$ is non-negative over $\prod_{i=1}^N [0, u_i^{max}]$ (else, a positive constant can be added to it to make it non-negative).

 The idea is to introduce \emph{proxy variables} $\gamma_i(t)$ that relate to running averages of player $i$ utility. The proxy variables are chosen every round $t$ in the interval $[0, u_i^{max}]$ and must satisfy: 
\[ \lim_{t\rightarrow\infty} \left[ \overline{\gamma}_i(t) - \overline{u}_i(t) \right] = 0 \: \: \forall i \in \script{N} \]
The above constraint is enforced by defining virtual queues $Z_i(t)$ for all $i \in \script{N}$ with update equation: 
\begin{equation} \label{eq:z-update} 
Z_i(t+1) = Z_i(t) + \gamma_i(t) - u_i(t) 
\end{equation} 
By summing \eqref{eq:z-update},  is clear that for all $t\in \{1, 2, 3, \ldots\}$ one has: 
\begin{equation} \label{eq:proxy-equality} 
 \overline{u}_i(t) = \overline{\gamma}_i(t) - \frac{Z_i(t)-Z_i(0)}{t} 
 \end{equation} 
 Therefore, it is desirable to make each queue $Z_i(t)$ rate stable.  The equality \eqref{eq:proxy-equality} 
implies: 
 \begin{equation} \label{eq:proxy-equality2} 
 \norm{\overline{u}(t) - \overline{\gamma}(t)} = \frac{\norm{Z(t)-Z(0)}}{t}
 \end{equation} 
where: 
\begin{eqnarray*}
\overline{u}(t) &=& (\overline{u}_1(t), \ldots, \overline{u}_N(t)) \\
\overline{\gamma}(t) &=& (\overline{\gamma}_1(t), \ldots, \overline{\gamma}_N(t))
\end{eqnarray*}

Let $Z(t) = (Z_1(t), \ldots, Z_N(t))$ be the vector of $Z_i(t)$ values. As before, define $Q(t)=(Q_1(t), \ldots, Q_N(t))$, with queues $Q_i(t)$ defined in  \eqref{eq:q-update}. Define a new Lyapunov function that considers both types of queues: 
\[ L(t) = \frac{1}{2}\norm{Q(t)}^2 + \frac{1}{2}\norm{Z(t)}^2\]
Define $\Delta(t) = L(t+1) - L(t)$.  The first step is to compute a bound on the new drift-plus-penalty expression: 
\[ \Delta(t) - V\phi(\gamma(t)) \]
where $\gamma(t) = (\gamma_1(t), \ldots, \gamma_N(t))$. 

\begin{lem} Under any algorithm for choosing $\alpha(t) \in \script{A}_1\times \cdots \script{A}_N$ and 
$\gamma(t) \in \prod_{i=1}^N[0, u_i^{max}]$, one has for all $t \in \{0, 1, 2, \ldots\}$: 
\begin{align}
&\Delta(t) - V\phi(\gamma(t)) \leq C - V\phi(\gamma(t)) \nonumber \\
& + \sum_{i=1}^N Q_i(t)[\hat{u}_i(b(t), \omega(t)) - \hat{u}_i(\alpha(t), \omega(t))] \nonumber \\
& + \sum_{i=1}^NZ_i(t)[\gamma_i(t)-\hat{u}_i(\alpha(t),\omega(t))] \label{eq:DPP2} 
\end{align}
where $C = \sum_{i=1}^N(u_i^{max})^2$. 
\end{lem}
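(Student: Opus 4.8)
The plan is to bound the Lyapunov drift $\Delta(t) = L(t+1)-L(t)$ for the combined Lyapunov function $L(t) = \frac{1}{2}\norm{Q(t)}^2 + \frac{1}{2}\norm{Z(t)}^2$ by handling the $Q$-queues and the $Z$-queues separately, then subtracting the penalty term $V\phi(\gamma(t))$ from both sides. Writing $\Delta(t) = \left[\frac{1}{2}\norm{Q(t+1)}^2 - \frac{1}{2}\norm{Q(t)}^2\right] + \left[\frac{1}{2}\norm{Z(t+1)}^2 - \frac{1}{2}\norm{Z(t)}^2\right]$, the first bracket is treated exactly as in the proof of Lemma \ref{lem:compute}: square the update \eqref{eq:q-update}, use $\max[y,0]^2\leq y^2$ to get $Q_i(t+1)^2 \leq Q_i(t)^2 + (x_i(t)-u_i(t))^2 + 2Q_i(t)[x_i(t)-u_i(t)]$, sum over $i$, divide by $2$, and bound each $(x_i(t)-u_i(t))^2$ by $(u_i^{max})^2$ since $x_i(t), u_i(t)\in[0,u_i^{max}]$. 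This contributes $\frac{1}{2}\sum_{i=1}^N (u_i^{max})^2$ to the constant and the term $\sum_{i=1}^N Q_i(t)[x_i(t)-u_i(t)]$.

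For the $Z$-queues, the update \eqref{eq:z-update} has no projection, so directly $Z_i(t+1)^2 = Z_i(t)^2 + (\gamma_i(t)-u_i(t))^2 + 2Z_i(t)[\gamma_i(t)-u_i(t)]$. Summing over $i$ and dividing by $2$, and noting that both $\gamma_i(t)$ and $u_i(t)$ lie in $[0,u_i^{max}]$ so that $(\gamma_i(t)-u_i(t))^2 \leq (u_i^{max})^2$, this contributes another $\frac{1}{2}\sum_{i=1}^N (u_i^{max})^2$ to the constant and the term $\sum_{i=1}^N Z_i(t)[\gamma_i(t)-u_i(t)]$. Adding the two constants gives $C = \sum_{i=1}^N (u_i^{max})^2$ as claimed.

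Combining these bounds yields
\[ \Delta(t) \leq C + \sum_{i=1}^N Q_i(t)[x_i(t)-u_i(t)] + \sum_{i=1}^N Z_i(t)[\gamma_i(t)-u_i(t)]. \]
Subtracting $V\phi(\gamma(t))$ from both sides and substituting the definitions $u_i(t) = \hat{u}_i(\alpha(t),\omega(t))$ and $x_i(t) = \hat{u}_i(b(t),\omega(t))$ gives \eqref{eq:DPP2} exactly. The argument holds for any choice of $\alpha(t)\in\script{A}_1\times\cdots\times\script{A}_N$ and any $\gamma(t)\in\prod_{i=1}^N[0,u_i^{max}]$, since no optimality of the decisions is used anywhere in the derivation. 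There is no real obstacle here; the only point requiring a moment's care is justifying $(\gamma_i(t)-u_i(t))^2\leq (u_i^{max})^2$, which follows because $\gamma_i(t)$ is chosen in $[0,u_i^{max}]$ by hypothesis and $u_i(t)\in[0,u_i^{max}]$ by the standing boundedness assumption on the utility functions.
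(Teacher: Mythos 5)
Your proof is correct and follows exactly the route the paper intends: the paper omits this proof, stating only that it is ``similar to that of Lemma~\ref{lem:compute},'' and your argument is precisely that omitted argument---squaring the $Q_i$ update with $\max[y,0]^2 \leq y^2$, squaring the projection-free $Z_i$ update directly, bounding each squared difference by $(u_i^{max})^2$ to obtain $C=\sum_{i=1}^N (u_i^{max})^2$, and subtracting $V\phi(\gamma(t))$. Nothing further is needed.
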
 

\begin{proof} 
The proof is similar to that of Lemma \ref{lem:compute} and is omitted for brevity. 
\end{proof} 

\subsection{General algorithm} 

The algorithm makes greedy decisions to minimize the right-hand-side of \eqref{eq:DPP2} on every round $t$. 
Specifically, every round $t \in \{0, 1, 2, \ldots\}$, the game manager observes $Q(t)$, $Z(t)$, $\omega(t)$, and $b(t)$ and does the following: 
\begin{itemize} 
\item (Proxy variables) Choose $\gamma(t)= (\gamma_1(t), \ldots, \gamma_N(t))$ as the solution to: 
\begin{eqnarray*}
\mbox{Maximize:} & V\phi(\gamma(t)) - \sum_{i=1}^NZ_i(t)\gamma_i(t)\\
\mbox{Subject to:} & 0 \leq \gamma_i(t) \leq u_i^{max} \: \: \forall i \in \script{N} 
\end{eqnarray*}

\item (Suggestions) Choose $\alpha(t) = (\alpha_1(t), \ldots, \alpha_N(t))$ as the solution to: 
\begin{eqnarray*}
\mbox{Maximize:} & \sum_{i=1}^N\hat{u}_i(\alpha(t),\omega(t))[Q_i(t)+Z_i(t)] \\
\mbox{Subject to:} & \alpha_i(t) \in \script{A}_i \: \: \forall i \in \script{N} 
\end{eqnarray*}
Next, for each $i \in \script{N}$, 
send suggestion $\alpha_i(t)$ to player $i$. 

\item (Queue update) For each $i \in \script{N}$, update $Q_i(t)$ and $Z_i(t)$ 
via \eqref{eq:q-update} and \eqref{eq:z-update}. 
\end{itemize} 

This is again a simple causal algorithm that is implemented as the game progresses. 

\subsection{Constraint analysis} 

Define $\phi^{max}$ as the maximum of $\phi(\gamma)$ over $\gamma \in \prod_{i=1}^N[0,u_i^{max}]$.  Such a maximum exists since it is defined over a compact subset of $\mathbb{R}^N$ and the function $\phi(\gamma)$ is continuous.  It is known that every continuous function over a compact set is \emph{Lipshitz continuous}, so that there is a positive value $M$ such that: 
\begin{equation} \label{eq:lipschitz} 
 |\phi(\gamma) - \phi(r)| \leq M\norm{\gamma - r} 
 \end{equation} 
for all $\gamma, r \in \prod_{i=1}^N[0,u_i^{max}]$. 

\begin{thm} \label{thm:constraint2} Fix $V\geq 0$ and assume the algorithm in the previous subsection 
is used with this $V$ and with $Q_i(0)=Z_i(0)=0$ for all $i \in \script{N}$.  For all 
$t \in \{1, 2, 3, \ldots\}$ one has: 

a) $\frac{\sqrt{\sum_{i=1}^N Q_i(t)^2 + Z_i(t)^2}}{t} \leq \sqrt{\frac{2C + 2V\phi^{max}}{t}}$

b) $\overline{u}_i(t) - \overline{x}_i(t) \geq -\sqrt{\frac{2C + 2V\phi^{max}}{t}}$. 

c) The constraints \eqref{eq:p2}-\eqref{eq:p3} are satisfied. 

d) The utilities satisfy: 
\[ \phi(\overline{u}(t)) \geq \frac{1}{t}\sum_{\tau=0}^{t-1}\phi(\gamma(\tau)) - M \sqrt{\frac{2C + 2V\phi^{max}}{t}} \]
where $\overline{u}(t) = (\overline{u}_1(t), \ldots, \overline{u}_N(t))$ and $M$ is the Lipschitz constant in 
\eqref{eq:lipschitz}. 
\end{thm}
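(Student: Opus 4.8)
The plan is to mirror the proof of Theorem~\ref{thm:constraints}, now with the combined Lyapunov function $L(t) = \frac{1}{2}\norm{Q(t)}^2 + \frac{1}{2}\norm{Z(t)}^2$ and the drift-plus-penalty bound \eqref{eq:DPP2}. The one new idea is the choice of comparison decisions. Because \eqref{eq:DPP2} holds for \emph{any} choice of $(\gamma(t),\alpha(t))$ and because the stated algorithm picks $(\gamma(t),\alpha(t))$ to minimize the right-hand side of \eqref{eq:DPP2} over all feasible proxy vectors and suggestion vectors, I would upper bound that right-hand side by substituting the alternative $\alpha^*(\tau) = b(\tau)$ together with the alternative proxy vector $\gamma_i^*(\tau) = \hat{u}_i(b(\tau), \omega(\tau))$. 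This alternative is feasible since $b_i(\tau)\in\script{A}_i$ and $0 \le \hat{u}_i(b(\tau),\omega(\tau)) \le u_i^{max}$. With this substitution, the $Q_i(\tau)$-weighted term in \eqref{eq:DPP2} becomes $\sum_i Q_i(\tau)[\hat{u}_i(b(\tau),\omega(\tau)) - \hat{u}_i(b(\tau),\omega(\tau))] = 0$, and likewise the $Z_i(\tau)$-weighted term vanishes, leaving $\Delta(\tau) - V\phi(\gamma(\tau)) \le C - V\phi(\gamma^*(\tau))$. Using $\phi(\gamma(\tau)) \le \phi^{max}$ and $\phi(\gamma^*(\tau)) \ge 0$ (non-negativity of $\phi$) yields the per-round drift bound $\Delta(\tau) \le C + V\phi^{max}$.

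Part (a) then follows by summing $\Delta(\tau) \le C + V\phi^{max}$ over $\tau \in \{0,\ldots,t-1\}$, using $L(0)=0$ and $L(t)\ge 0$ to get $\norm{Q(t)}^2 + \norm{Z(t)}^2 = 2L(t) \le 2(C+V\phi^{max})t$, then dividing by $t^2$ and taking square roots. Part (b) follows exactly as in Theorem~\ref{thm:constraints}(b): apply Lemma~\ref{lem:virtual-queues}(a) with $Q_i(0)=0$ and bound $Q_i(t) \le \sqrt{\norm{Q(t)}^2+\norm{Z(t)}^2} \le \sqrt{(2C+2V\phi^{max})t}$. Part (c) is immediate: \eqref{eq:p3} holds because the algorithm always selects $\alpha_i(t)\in\script{A}_i$, and \eqref{eq:p2} holds because the lower bound in part (b) tends to $0$ as $t\to\infty$.

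For part (d) I would combine Jensen's inequality with Lipschitz continuity. Since $\phi$ is concave and each $\gamma(\tau)\in\prod_i[0,u_i^{max}]$, Jensen gives $\phi(\overline{\gamma}(t)) \ge \frac{1}{t}\sum_{\tau=0}^{t-1}\phi(\gamma(\tau))$. Next, I would invoke \eqref{eq:proxy-equality2}, which with $Z(0)=0$ reads $\norm{\overline{u}(t)-\overline{\gamma}(t)} = \norm{Z(t)}/t$, and bound $\norm{Z(t)} \le \sqrt{\norm{Q(t)}^2+\norm{Z(t)}^2} \le \sqrt{(2C+2V\phi^{max})t}$ using part (a). Since both $\overline{u}(t)$ and $\overline{\gamma}(t)$ are convex combinations of points in $\prod_i[0,u_i^{max}]$, they lie in that hyper-rectangle, so the Lipschitz bound \eqref{eq:lipschitz} applies and gives $\phi(\overline{u}(t)) \ge \phi(\overline{\gamma}(t)) - M\norm{\overline{u}(t)-\overline{\gamma}(t)} \ge \frac{1}{t}\sum_{\tau=0}^{t-1}\phi(\gamma(\tau)) - M\sqrt{(2C+2V\phi^{max})/t}$, which is the claim.

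The substance of the argument is the joint choice of $(\alpha^*,\gamma^*)$ that simultaneously annihilates both queue-weighted terms in \eqref{eq:DPP2}; the rest is bookkeeping identical in spirit to Theorem~\ref{thm:constraints}. The one point requiring care is in part (d), namely checking that the two vectors fed into the Lipschitz inequality really lie in the domain on which $\phi$ and the constant $M$ are defined — which is exactly why it matters that they are averages of feasible points.
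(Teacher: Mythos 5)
Your proposal is correct and follows essentially the same route as the paper: substitute the baseline pair $\alpha^*(\tau)=b(\tau)$, $\gamma_i^*=\hat{u}_i(b(\tau),\omega(\tau))$ into the drift-plus-penalty bound \eqref{eq:DPP2} to kill both queue-weighted terms and obtain $\Delta(\tau)\leq C+V\phi^{max}$, then sum for part (a), invoke the virtual-queue lemma for (b)--(c), and combine Jensen, \eqref{eq:proxy-equality2}, and the Lipschitz bound for (d). (If anything, your citation of Lemma~\ref{lem:virtual-queues} for part (b) is the more accurate reference than the paper's own pointer to \eqref{eq:proxy-equality}, which concerns the $Z_i$ queues rather than the $Q_i$ queues that control the regret constraint.)
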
 

\begin{proof} Since the algorithm makes decisions $\gamma(t)$ and $\alpha(t)$ to minimize the right-hand-side of \eqref{eq:DPP2} every round, one has for all rounds $\tau \in \{0, 1, 2,\ldots\}$: 
\begin{align*}
&\Delta(\tau) - V\phi(\gamma(\tau)) \leq C - V\phi(\gamma^*) \\
& + \sum_{i=1}^NQ_i(\tau)[\hat{u}_i(b(\tau), \omega(\tau)) - \hat{u}_i(\alpha^*(\tau), \omega(\tau))]\\
&+\sum_{i=1}^NZ_i(\tau)[\gamma_i^* - \hat{u}_i(\alpha^*(\tau), \omega(\tau))] 
\end{align*}
for any alternative vectors $\gamma^* \in \prod_{i=1}^N[0, u_i^{max}]$ and $\alpha^*(\tau) \in \script{A}_1 \times \ldots\times  \script{A}_N$.  Define $\alpha^*(\tau)=b(\tau)$ and $\gamma^* = (\gamma_1^*, \ldots, \gamma_N^*)$ where $\gamma_i^* =\hat{u}_i(\alpha^*(\tau), \omega(\tau))$.  Then the above inequality becomes: 
\begin{align*}
&\Delta(\tau) - V\phi(\gamma(\tau)) \leq C - V\phi(\gamma^*) 
\end{align*}
Rearranging terms and using the fact that $0 \leq \phi(\gamma) \leq \phi^{max}$ for all $\gamma \in \prod_{i=1}^N[0, u_i^{max}]$ gives the following for all rounds $\tau$:  
\[ \Delta(\tau) \leq C + V\phi^{max} \]
The result of part (a) then follows by an argument similar to that of Theorem \ref{thm:constraints}. 

Part (b) follows from (a) together with \eqref{eq:proxy-equality}. 
 Part (c) follows immediately from part (b) by taking a limit. To prove part (d), note by Jensen's inequality for concave functions that for any round $t>0$: 
\begin{eqnarray*}
\frac{1}{t}\sum_{\tau=0}^{t-1} \phi(\gamma(\tau)) &\leq& \phi(\overline{\gamma}(t)) \\
&\leq& \phi(\overline{u}(t)) +  M\norm{\overline{\gamma}(t)-\overline{u}(t)} \\
&\leq& \phi(\overline{u}(t)) +  M\sqrt{\frac{2C + 2V\phi^{max}}{t}}
\end{eqnarray*}
where the second inequality above follows by \eqref{eq:lipschitz} and 
the final inequality follows by part (a) together with \eqref{eq:proxy-equality2}. 
\end{proof} 

Theorem \ref{thm:constraint2} provides a bound on the achieved performance
$\phi(\overline{u}(t))$ in terms of the time average $\frac{1}{t}\sum_{\tau=0}^{t-1}\phi(\gamma(\tau))$.  The next theorem completes the analysis by bounding the performance of this time average in terms of averages of the 
ideal $\psi_T[k]$ values over $T$-slot frames. 

\begin{thm} Fix $V \geq 0$ and assume the algorithm in the previous subsection is used with this $V$ and with $Q_i(0)=Z_i(0)=0$ for all $i \in \script{N}$.  For all positive integers $T$ and $K$ the following holds: 
\[ \frac{1}{KT}\sum_{\tau=0}^{KT-1} \phi(\gamma(\tau)) \geq \frac{1}{K}\sum_{k=0}^{K-1}\psi_T[k] - \frac{TC}{V} \]
\end{thm}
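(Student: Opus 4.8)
The plan is to follow the proof of Theorem~\ref{thm:performance} almost verbatim, using the two-queue drift-plus-penalty bound \eqref{eq:DPP2} in place of \eqref{eq:DPP}. Since on every round the algorithm picks $\gamma(t)$ and $\alpha(t)$ to minimize the right-hand side of \eqref{eq:DPP2}, for each round $\tau$ and each alternative pair $\gamma^*\in\prod_{i=1}^N[0,u_i^{max}]$, $\alpha^*(\tau)\in\script{A}_1\times\cdots\times\script{A}_N$ one has
\[ \Delta(\tau) - V\phi(\gamma(\tau)) \leq C - V\phi(\gamma^*) + \sum_{i=1}^N Q_i(\tau)[\hat{u}_i(b(\tau),\omega(\tau)) - \hat{u}_i(\alpha^*(\tau),\omega(\tau))] + \sum_{i=1}^N Z_i(\tau)[\gamma_i^* - \hat{u}_i(\alpha^*(\tau),\omega(\tau))]. \]
First I would fix a frame index $k$ and a tolerance $\epsilon>0$, and choose $\{\alpha^*(\tau)\}_{\tau=kT}^{(k+1)T-1}$ together with $\gamma^*=(\gamma_1^*,\ldots,\gamma_N^*)$ to be an $\epsilon$-optimal solution of the $T$-slot lookahead problem \eqref{eq:tslot-1}--\eqref{eq:tslot-5}, so that $\gamma_i^* = \frac{1}{T}\sum_{\tau=kT}^{(k+1)T-1}\hat{u}_i(\alpha^*(\tau),\omega(\tau))$, the regret constraint \eqref{eq:tslot-3} holds, and $\phi(\gamma^*)\geq\psi_T[k]-\epsilon$. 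The same vector $\gamma^*$ is plugged in as the alternative proxy choice on every round of the frame.

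Next I would sum the displayed inequality over $\tau\in\{kT,\ldots,(k+1)T-1\}$. Exactly as in Theorem~\ref{thm:performance}, I would replace $Q_i(\tau)$ by $Q_i(kT)$ and $Z_i(\tau)$ by $Z_i(kT)$; the incurred errors are controlled by $|Q_i(\tau)-Q_i(kT)|\leq(\tau-kT)u_i^{max}$ and $|Z_i(\tau)-Z_i(kT)|\leq(\tau-kT)u_i^{max}$ (each step of either queue changes by at most $u_i^{max}$), together with $|\hat{u}_i(b(\tau),\omega(\tau))-\hat{u}_i(\alpha^*(\tau),\omega(\tau))|\leq u_i^{max}$ and $|\gamma_i^*-\hat{u}_i(\alpha^*(\tau),\omega(\tau))|\leq u_i^{max}$. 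The $T$ copies of $C$ plus the two error sums $C\sum_{m=0}^{T-1}m$ add up to exactly $CT^2$. The point of the particular choices of $\alpha^*$ and $\gamma^*$ is that both queue terms then collapse: by \eqref{eq:tslot-3} one has $\sum_{\tau=kT}^{(k+1)T-1}\hat{u}_i(b(\tau),\omega(\tau))\leq T\gamma_i^*$, so the $Q_i(kT)\geq 0$ term is nonpositive and may be dropped, while $\sum_{\tau=kT}^{(k+1)T-1}[\gamma_i^*-\hat{u}_i(\alpha^*(\tau),\omega(\tau))]=0$ by the definition of $\gamma_i^*$, so the $Z_i(kT)$ term is identically zero.

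This produces the frame bound $\sum_{\tau=kT}^{(k+1)T-1}\Delta(\tau) - V\sum_{\tau=kT}^{(k+1)T-1}\phi(\gamma(\tau)) \leq CT^2 - VT\psi_T[k] + \epsilon VT$; letting $\epsilon\rightarrow 0$ deletes the last term. Then I would sum over $k\in\{0,\ldots,K-1\}$, use that the drift telescopes to $L(KT)-L(0)=L(KT)\geq 0$ since $Q_i(0)=Z_i(0)=0$, divide by $VKT$, and rearrange to reach $\frac{1}{KT}\sum_{\tau=0}^{KT-1}\phi(\gamma(\tau))\geq\frac{1}{K}\sum_{k=0}^{K-1}\psi_T[k]-\frac{TC}{V}$. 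The only delicate point is the bookkeeping in the queue substitution step: one must verify that the accumulated errors from replacing both $Q_i(\tau)$ and $Z_i(\tau)$ by their frame-start values, added to the $T$ per-round constants $C$, telescope precisely to $CT^2$ and not to a larger constant --- but this is the same routine computation as in the proof of Theorem~\ref{thm:performance}, now carried out for two families of queues instead of one.
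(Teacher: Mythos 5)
Your proposal is correct and follows exactly the route the paper intends: the paper omits this proof, stating only that it is "similar to that of Theorem \ref{thm:performance}," and your argument is precisely that adaptation, with the queue-shifting bookkeeping for the two families $Q_i$ and $Z_i$ correctly summing to $CT^2$ and the $\epsilon$-optimal lookahead solution making the $Q$-term nonpositive and the $Z$-term vanish. No gaps.
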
 

\begin{proof} 
The proof is similar to that of Theorem \ref{thm:performance} and is omitted for brevity.
\end{proof} 

Combining the results of the above two theorems gives the following performance guarantee for all positive integers $T$ and $K$: 
\begin{eqnarray*}
\phi(\overline{u}(KT)) &\geq& \frac{1}{K}\sum_{k=0}^{K-1}\psi_T[k] \\
&&  - \frac{TC}{V} - M \sqrt{\frac{2C + 2V\phi^{max}}{KT}}
\end{eqnarray*}
Rearranging and taking a limit as $K\rightarrow\infty$ gives: 

\[ \liminf_{K\rightarrow\infty} \left[\phi(\overline{u}(KT)) - \frac{1}{K}\sum_{k=0}^{K-1}\psi_T[k]\right] \geq -\frac{TC}{V}\]
For any $T>0$, 
the right-hand-side above can be made arbitrarily small by an appropriately large value of $V$, with a corresponding convergence time tradeoff as specified in part (b) of Theorem \ref{thm:constraint2}. 

\section{More conservative constraints} 

This section considers a variation that requires the achieved utility of each player \emph{on each round} to be at least as large as the corresponding baseline utility for that round.  That is, the time average constraint \eqref{eq:p2} is replaced by the more restrictive constraint: 
\begin{equation}\label{eq:new} 
 u_i(t) \geq x_i(t) \: \: \forall i \in \script{N} \: , \: \forall t \in \{0, 1, 2, \ldots\} 
 \end{equation} 
where we recall that: 
\begin{eqnarray*}
u_i(t) &=& \hat{u}_i(\alpha(t), \omega(t)) \\
x_i(t) &=& \hat{u}_i(b(t), \omega(t)) 
\end{eqnarray*}
The resulting problem of interest is: 
\begin{align}
&\mbox{Maximize:} \nonumber \\
&\liminf_{t\rightarrow\infty} \phi(\overline{u}_1(t), \ldots, \overline{u}_N(t)) \label{eq:c1}  \\
&\mbox{Subject to:} \nonumber \\ 
&\hat{u}_i(\alpha(t), \omega(t)) \geq \hat{u}_i(b(t), \omega(t)) \forall i \in \script{N}, \forall t \in \{0, 1, 2, \ldots\} \label{eq:c2} \\
& \alpha_i(t) \in \script{A}_i \: \: \forall i \in \script{N}, \forall t \in \{0, 1, 2, \ldots \} \label{eq:c3} 
\end{align} 
where $\phi(u_1, \ldots, u_N)$ is again assumed to be a concave and continuous function over $\prod_{i=1}^N[0, u_i^{max}]$. 
The above problem is always feasible because the decisions $\alpha(t)=b(t)$ for all $t \in \{0, 1, 2, \ldots\}$ trivially satisfy the constraints \eqref{eq:c2}-\eqref{eq:c3}.   If the constraints of the above problem are satisfied, then the constraints of problem \eqref{eq:p1}-\eqref{eq:p3} are also satisfied  (but not vice versa).  Enforcing more restrictive constraints can reduce the optimal objective function value.  However, it provides players an immediate guarantee that the suggested decisions are at least as good as the baseline decisions, whereas problem \eqref{eq:p1}-\eqref{eq:p3} provides a similar guarantee only in the limit of a time average over multiple rounds.


The above problem can be written more simply as follows. 
Define: 
\begin{eqnarray*}
u(t) &=& (u_1(t), \ldots, u_N(t)) \\
\overline{u}(t) &=& (\overline{u}_1(t), \ldots, \overline{u}_N(t)) 
\end{eqnarray*}
For each $b \in \script{A}_1\times \cdots \times \script{A}_N$ and each $\omega \in \Omega$, define 
 $\script{A}(b,\omega)$ as the set of all action vectors $\alpha = (\alpha_1, \ldots, \alpha_N)$ that satisfy: 
\begin{eqnarray*}
\hat{u}_i(\alpha, \omega) \geq \hat{u}_i(b, \omega) & \forall i \in \script{N} \\
\alpha_i \in \script{A}_i & \forall i \in \script{N}
\end{eqnarray*}
The set $\script{A}(b,\omega)$ is non-empty for all $(b,\omega)$ because it contains the element $b$. The problem \eqref{eq:c1}-\eqref{eq:c3} is equivalent to the following: 
\begin{eqnarray} 
\mbox{Maximize:} & \liminf_{t\rightarrow\infty} \phi(\overline{u}(t)) \label{eq:s1} \\
\mbox{Subject to:} & \alpha(t) \in \script{A}(b(t),\omega(t)) \: \: \forall t \in \{0, 1, 2, \ldots\} \label{eq:s2}
\end{eqnarray} 

If the set $\script{A}(b(t),\omega(t))$ contains only the single element $b(t)$ for all $t \in \{0, 1, 2, \ldots\}$, then there are no decisions and the game manager is forced to choose $\alpha(t)=b(t)$ for all rounds $t$.   In this case, the problem is so restricted that the game manager cannot provide any utility gain. 
However, for many problems the sets $\script{A}(b(t),\omega(t))$ can have more than one element.

\subsection{Weighted sum of utilities} 

First consider the special case: 
\[ \phi(u_1, \ldots, u_N) = \sum_{i=1}^N\theta_i u_i \]
where $\theta_i$ are real numbers. 
In this special case, the following simple strategy is optimal: 
Every round $t \in \{0, 1, 2, \ldots\}$, the game manager observes $b(t)$ and $\omega(t)$ and then chooses a vector $\alpha(t) \in \script{A}(b(t),\omega(t))$ to maximize the following expression: 
\begin{equation} \label{eq:expression} 
  \sum_{i=1}^N\theta_i \hat{u}_i(\alpha(t), \omega(t)) 
  \end{equation} 

To see why this is optimal, consider any sequences $\{b(t)\}_{t=0}^{\infty}$ and $\{\omega(t)\}_{t=0}^{\infty}$. 
A sequence of actions $\{\alpha(t)\}_{t=0}^{\infty}$ is said to be a \emph{feasible sequence of actions} if $\alpha(t) \in \script{A}(b(t),\omega(t))$ for all $t$. 
Define $\{\alpha^*(t)\}_{t=0}^{\infty}$ as a feasible sequence of actions that maximize \eqref{eq:expression}
on every round $t$.  Let $\{\alpha'(t)\}_{t=0}^{\infty}$ be any alternative feasible sequence of actions.  
Then for all rounds $\tau\in \{0, 1, 2, \ldots\}$ one has: 
\[   \sum_{i=1}^N\theta_i \hat{u}_i(\alpha^*(\tau), \omega(\tau)) \geq \sum_{i=1}^N\theta_i\hat{u}_i(\alpha'(\tau), \omega(\tau)) \]
Fix $t$ as a positive integer. 
Summing the above over $\tau \in \{0, 1, 2, \ldots, t-1\}$ and dividing by $t$ proves that: 
\begin{equation} \label{eq:every-t} 
 \sum_{i=1}^N\theta_i \overline{u}^*_i(t) \geq \sum_{i=1}^N\theta_i\overline{u}_i'(t) 
 \end{equation} 
where $\overline{u}_i^*(t)$ is the time average utility of player $i$ over the first $t$ rounds under the actions 
$\{\alpha^*(\tau)\}_{\tau=0}^{\infty}$, while $\overline{u}_i'(t)$ is the corresponding time average 
utility under the actions $\{\alpha'(\tau)\}_{\tau=0}^{\infty}$.  The inequality \eqref{eq:every-t} is true for all rounds $t \in \{1, 2, 3, \ldots\}$ and so it is also true when taking limits as $t\rightarrow \infty$.

\subsection{General concave function of utilities} 

A naive attempt to solve \eqref{eq:s1}-\eqref{eq:s2} 
might consider the policy of observing $b(t)$ and $\omega(t)$ every round $t$ and then choosing $\alpha(t) \in \script{A}(b(t),\omega(t))$ to maximize $\phi(u_1(t), \ldots, u_N(t))$.  
The previous subsection shows this naive policy is optimal in the special case when $\phi(\cdot)$ is linear. However, it is not necessarily optimal when $\phi(\cdot)$ is concave but nonlinear.  

The problem \eqref{eq:s1}-\eqref{eq:s2} is similar to an \emph{opportunistic scheduling problem} for wireless networks, as considered in \cite{neely-fairness-ton}\cite{now}\cite{sno-text} via the drift-plus-penalty approach and in \cite{stolyar-greedy}\cite{atilla-fairness-ton}\cite{vijay-allerton02}\cite{prop-fair-down} via different approaches.  The works \cite{neely-fairness-ton}\cite{now}\cite{sno-text} transform a problem involving the maximization of a concave function of time averages into a problem of maximizing the time average of a function.  The same approach is fruitful in this game theory context.   Define proxy variables $\gamma_i(t)$ for all $i \in \script{N}$.  Define $\gamma(t) = (\gamma_1(t), \ldots, \gamma_N(t))$. Consider the following problem: 
\begin{eqnarray}
\mbox{Max:} & \liminf_{t\rightarrow\infty} \frac{1}{t}\sum_{\tau=0}^{t-1} \phi(\gamma(t)) \label{eq:t1} \\
\mbox{Subj to:} & \lim_{t\rightarrow\infty} [\overline{\gamma}_i(t) - \overline{u}_i(t)] = 0 \: \: \forall i \in \script{N}  \label{eq:t2} \\
& 0 \leq \gamma_i(t) \leq u_i^{max} \: \: \forall i \in \script{N}, \forall t \in \{0, 1, 2, \ldots\} \label{eq:t3} \\
& \alpha(t) \in \script{A}(b(t),\omega(t)) \: \: \forall t \in \{0, 1, 2, \ldots\} \label{eq:t4}  
\end{eqnarray}  

\begin{lem} \label{lem:one-side} If $\{\alpha(t)\}_{t=0}^{\infty}$ and $\{\gamma(t)\}_{t=0}^{\infty}$ are sequences of decisions that satisfy the constraints \eqref{eq:t2}-\eqref{eq:t4}, then $\{\alpha(t)\}_{t=0}^{\infty}$ is a feasible sequence of control actions for the original problem \eqref{eq:s1}-\eqref{eq:s2} with the following utility guarantee: 
\begin{equation} \label{eq:at-least} 
\liminf_{t\rightarrow\infty} \phi(\overline{u}(t)) \geq  \liminf_{t\rightarrow\infty} \frac{1}{t}\sum_{\tau=0}^{t-1}\phi(\gamma(\tau))  
\end{equation} 
\end{lem}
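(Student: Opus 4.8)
The plan is to verify feasibility for the original problem and then establish the utility inequality \eqref{eq:at-least} using Jensen's inequality together with the asymptotic equality of the proxy and utility time averages. First I would note that feasibility is immediate: constraint \eqref{eq:t4} states exactly that $\alpha(t) \in \script{A}(b(t),\omega(t))$ for all $t$, which is precisely constraint \eqref{eq:s2} of the original problem. So $\{\alpha(t)\}_{t=0}^{\infty}$ is automatically a feasible sequence of control actions for \eqref{eq:s1}-\eqref{eq:s2}.

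Next I would turn to the utility guarantee. Fix $t>0$. By Jensen's inequality applied to the concave function $\phi$, one has $\frac{1}{t}\sum_{\tau=0}^{t-1}\phi(\gamma(\tau)) \leq \phi(\overline{\gamma}(t))$, where $\overline{\gamma}(t) = (\overline{\gamma}_1(t), \ldots, \overline{\gamma}_N(t))$ is the running average of the proxy vectors. Then I would write $\phi(\overline{\gamma}(t)) = \phi(\overline{u}(t)) + [\phi(\overline{\gamma}(t)) - \phi(\overline{u}(t))]$ and bound the bracketed difference. Since both $\overline{\gamma}(t)$ and $\overline{u}(t)$ lie in the compact hyper-rectangle $\prod_{i=1}^N[0,u_i^{max}]$ (the proxy variables are confined to $[0,u_i^{max}]$ by \eqref{eq:t3} and the utilities by assumption), the Lipschitz bound of the type in \eqref{eq:lipschitz} gives $|\phi(\overline{\gamma}(t)) - \phi(\overline{u}(t))| \leq M\norm{\overline{\gamma}(t)-\overline{u}(t)}$ for the relevant Lipschitz constant $M$. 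Combining, for all $t>0$:
\[ \frac{1}{t}\sum_{\tau=0}^{t-1}\phi(\gamma(\tau)) \leq \phi(\overline{u}(t)) + M\norm{\overline{\gamma}(t)-\overline{u}(t)} \]

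The final step is to take $\liminf$ as $t\rightarrow\infty$. Constraint \eqref{eq:t2} states $\lim_{t\rightarrow\infty}[\overline{\gamma}_i(t)-\overline{u}_i(t)]=0$ for each $i\in\script{N}$, so $\norm{\overline{\gamma}(t)-\overline{u}(t)} \to 0$ and hence the error term $M\norm{\overline{\gamma}(t)-\overline{u}(t)} \to 0$. Taking $\liminf$ of both sides of the displayed inequality — and using that $\liminf$ of a sum is at least the sum of $\liminf$s on the right, with the vanishing term contributing $0$ — yields $\liminf_{t\rightarrow\infty}\frac{1}{t}\sum_{\tau=0}^{t-1}\phi(\gamma(\tau)) \leq \liminf_{t\rightarrow\infty}\phi(\overline{u}(t))$, which is \eqref{eq:at-least}. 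I do not anticipate a genuine obstacle here; the only point requiring a little care is being explicit that the convergence of the error term to zero lets us pass the $\liminf$ across the inequality cleanly (the standard fact that if $a_t \le b_t + c_t$ with $c_t\to 0$ then $\liminf a_t \le \liminf b_t$). The use of Jensen's inequality and Lipschitz continuity here mirrors exactly the argument already carried out in the proof of part (d) of Theorem \ref{thm:constraint2}, so the machinery is entirely in place.
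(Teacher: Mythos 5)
Your proposal is correct and follows essentially the same route as the paper's own proof: Jensen's inequality to pass from the average of $\phi(\gamma(\tau))$ to $\phi(\overline{\gamma}(t))$, then Lipschitz continuity to compare with $\phi(\overline{u}(t))$, then the $\liminf$ with the error term vanishing by constraint \eqref{eq:t2}, with feasibility immediate from \eqref{eq:t4}. No gaps; your extra care about passing the $\liminf$ across the vanishing term is a harmless elaboration of what the paper leaves implicit.
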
 
\begin{proof} 
Let $\{\alpha(t)\}_{t=0}^{\infty}$ and $\{\gamma(t)\}_{t=0}^{\infty}$ be sequences
that satisfy  \eqref{eq:t2}-\eqref{eq:t4}. Fix $t>0$.  By Jensen's inequality and concavity of the $\phi(\gamma)$ function  one has: 
\begin{eqnarray}
 \frac{1}{t} \sum_{\tau=0}^{t-1}\phi(\gamma(\tau)) &\leq& \phi(\overline{\gamma}(t)) \nonumber \\
 &\leq& \phi(\overline{u}(t)) + M\norm{\overline{\gamma}(t) - \overline{u}(t)} \label{eq:lip} 
 \end{eqnarray}
 where $M$ is the Lipschitz constant for the function $\phi(\cdot)$.  Taking the $\liminf$ of both sides and using \eqref{eq:t2} proves \eqref{eq:at-least}.  Furthermore, since $\alpha(t)$ satisfies \eqref{eq:t4}, it is a feasible sequence of control actions for the original problem.
\end{proof} 

Lemma \ref{lem:one-side} suggests that one should make actions in an effort to solve the problem
\eqref{eq:t1}-\eqref{eq:t4}.   Any decisions that are feasible for the problem \eqref{eq:t1}-\eqref{eq:t4} and that produce a ``large'' value of the objective function will also be feasible for the original problem with a corresponding objective function value that is at least as large. Let $v_1^{opt}$ and $v_2^{opt}$ be the supremum objective function values for the problems \eqref{eq:s1}-\eqref{eq:s2} and \eqref{eq:t1}-\eqref{eq:t4}, respectively.  The above lemma implies that $v_1^{opt} \geq v_2^{opt}$.  If $(b(t), \omega(t))$ is ergodic, it turns out that $v_1^{opt} = v_2^{opt}$ with probability 1 (see \cite{sno-text}), although this is not necessarily true for general non-ergodic problems.  

The Lyapunov optimization method can be used to treat the problem \eqref{eq:t1}-\eqref{eq:t4}.  To enforce the constraints \eqref{eq:t2}, for each $i \in \script{N}$ define a virtual queue: 
\begin{equation} \label{eq:z2-update} 
Z_i(t+1) = Z_i(t) + \gamma_i(t) - u_i(t) 
\end{equation} 
Define $Z(t) = (Z_1(t), \ldots, Z_N(t))$. Define $L(t) = \frac{1}{2}\norm{Z(t)}^2$ and $\Delta(t) = L(t+1) - L(t)$. 
As before, it can be shown that: 
\begin{align}
&\Delta(t) - V\phi(\gamma(t)) \nonumber \\
&\leq D - V\phi(\gamma(t)) \nonumber \\
&  + \sum_{i=1}^NZ_i(t)[\gamma_i(t) - \hat{u}_i(\alpha(t), \omega(t))] \label{eq:conservative-DPP}
\end{align}
where $D$ is a constant. Minimizing the right-hand-side of \eqref{eq:conservative-DPP} every round $t$ results in the following algorithm:  Every round $t$, the game manager observes $Z(t)$, $b(t)$, $\omega(t)$.  Then: 
\begin{itemize} 
\item (Proxy variables)  Choose $\gamma(t) = (\gamma_1(t), \ldots, \gamma_N(t))$ as the solution to: 
\begin{eqnarray*}
\mbox{Maximize:} &  V\phi(\gamma(t)) - \sum_{i=1}^NZ_i(t)\gamma_i(t) \\
\mbox{Subject to:} & 0 \leq \gamma_i(t) \leq u_i^{max} \: \: \forall i \in \script{N}
\end{eqnarray*}
\item (Suggestions) Choose $\alpha(t) = (\alpha_1(t), \ldots, \alpha_N(t))$ as the solution to: 
\begin{eqnarray}
\mbox{Maximize:} & \sum_{i=1}^N Z_i(t) \hat{u}_i(\alpha(t), \omega(t)) \label{eq:sugg1} \\
\mbox{Subject to:} & \alpha(t) \in \script{A}(b(t),\omega(t)) \label{eq:sugg2} 
\end{eqnarray}
Then send these suggestions to the corresponding players. 
\item (Queue update) Update $Z_i(t)$ for $i \in \script{N}$ via \eqref{eq:z2-update}. 
\end{itemize} 

For simplicity, it is assumed throughout that for all $t\in\{0, 1,2,\ldots\}$ there exists an $\alpha(t)$ that solves \eqref{eq:sugg1}-\eqref{eq:sugg2} (else, the $C$-additive approximation theory of \cite{sno-text} can be used).  It follows that the resulting $\alpha(t)$ sequence is a feasible sequence of control actions for the original problem \eqref{eq:s1}-\eqref{eq:s2}.  The next subsection analyzes its  performance. 

\subsection{Analysis for conservative constraints and concave $\phi(\cdot)$} 

Fix sequences $\{\omega(t)\}_{t=0}^{\infty}$ and $\{b(t)\}_{t=0}^{\infty}$.  Fix a positive integer $T$. 
For $k \in \{0, 1, 2, \ldots\}$ consider the following $T$-slot lookahead problem, 
which uses decision variables $\gamma=(\gamma_1, \ldots, \gamma_N)$ and $\alpha(t)=(\alpha_1(t), \ldots, \alpha_N(t))$: 
\begin{align}
&\mbox{Maximize:} \nonumber \\
& \phi(\gamma)  \label{eq:conservative1} \\
&\mbox{Subject to:} \nonumber \\
& \gamma_i = \frac{1}{T}\sum_{\tau=kT}^{kT+T-1} \hat{u}_i(\alpha(\tau), \omega(\tau))\: \: \forall i \in \script{N} \label{eq:conservative2} \\
& \alpha(\tau) \in \script{A}(b(\tau), \omega(\tau)) \: \: \forall \tau \in \{kT, \ldots, (k+1)T-1\}\label{eq:conservative3} 
\end{align}

Define $\psi_T[k]$ as the supremum objective function value in the above problem. Thus, for any $\epsilon>0$, there is a sequence of decisions $\alpha(\tau)$ for $\tau \in \{kT, \ldots, (k+1)T-1\}$ and a vector $\gamma = (\gamma_1, \ldots, \gamma_N)$ that together satisfy \eqref{eq:conservative2}-\eqref{eq:conservative3} and also satisfy: 
\begin{equation} \label{eq:finally-satisfy} 
 \psi_T[k]-\epsilon \leq \phi(\gamma) \leq \psi_T[k]   
 \end{equation} 

\begin{thm} Fix $V\geq 0$ and assume the algorithm in the previous subsection is used with this $V$ and with $Z_i(0)=0$ for all $i \in \script{N}$. For all positive integers $T$ and $K$ the following holds: 
\[ \phi(\overline{u}(KT)) \geq \frac{1}{K}\sum_{k=0}^{K-1} \psi_T[k] - \frac{DT}{V} - M\sqrt{\frac{2D+2V\phi^{max}}{KT}} \]
where $M$ is the Lipschitz constant for the function $\phi(\cdot)$ and $D$ is the constant in \eqref{eq:conservative-DPP}. 
In particular, for all positive integers $T$ one has: 
\begin{equation} \label{eq:last} 
\liminf_{t\rightarrow\infty} \phi(\overline{u}(t)) \geq \liminf_{K\rightarrow\infty} \frac{1}{K}\sum_{k=0}^{K-1} \psi_T[k] - \frac{DT}{V} 
\end{equation} 
\end{thm}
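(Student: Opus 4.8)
The plan is to reuse the two-stage structure of the proof of Theorem~\ref{thm:performance} together with the Jensen/Lipschitz idea from part~(d) of Theorem~\ref{thm:constraint2}, now with only the single family of virtual queues $Z_i(t)$ from \eqref{eq:z2-update}. \textbf{Stage~1 (frame drift).} Since the algorithm chooses $\gamma(\tau)$ and $\alpha(\tau)\in\script{A}(b(\tau),\omega(\tau))$ to minimize the right-hand side of \eqref{eq:conservative-DPP}, that inequality continues to hold with any competitor $\gamma^*\in\prod_{i=1}^N[0,u_i^{max}]$ and $\alpha^*(\tau)\in\script{A}(b(\tau),\omega(\tau))$ substituted for the chosen decisions. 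I would fix a frame $k$, sum this competitor inequality over $\tau\in\{kT,\dots,(k+1)T-1\}$, and replace each $Z_i(\tau)$ by the frame-head value $Z_i(kT)$; since $|Z_i(\tau)-Z_i(kT)|\leq(\tau-kT)u_i^{max}$ and $|\gamma_i^*-\hat{u}_i(\alpha^*(\tau),\omega(\tau))|\leq u_i^{max}$, that costs at most $2D\sum_{m=0}^{T-1}m = DT(T-1)$ (with $2D=\sum_i(u_i^{max})^2$), which together with the $TD$ coming from summing the constant $D$ over the frame produces a leading term $T^2D$.

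\textbf{Stage~2 (choosing the competitor).} The crucial observation is that if $\gamma^*$ is taken to be the frame average of the utilities produced by $\{\alpha^*(\tau)\}$ --- i.e., constraint \eqref{eq:conservative2} --- then the queue-weighted residual left over from Stage~1 cancels identically. So for each $\epsilon>0$ I would pick $(\gamma^*,\{\alpha^*(\tau)\}_{\tau=kT}^{(k+1)T-1})$ to be an $\epsilon$-optimal solution of the $T$-slot lookahead problem \eqref{eq:conservative1}--\eqref{eq:conservative3}, so that $\phi(\gamma^*)\geq\psi_T[k]-\epsilon$ by \eqref{eq:finally-satisfy}; letting $\epsilon\to0$ then gives
\[ \sum_{\tau=kT}^{(k+1)T-1}\Delta(\tau)-V\sum_{\tau=kT}^{(k+1)T-1}\phi(\gamma(\tau))\leq T^2D-VT\,\psi_T[k]. \]
Summing over $k\in\{0,\dots,K-1\}$, telescoping the drift, and using $L(0)=0\leq L(KT)$ gives $\frac{1}{KT}\sum_{\tau=0}^{KT-1}\phi(\gamma(\tau))\geq\frac{1}{K}\sum_{k=0}^{K-1}\psi_T[k]-\frac{DT}{V}$.

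\textbf{Stage~3 (Jensen, Lipschitz, queue size).} Independently, taking the competitor $\alpha^*(\tau)=b(\tau)$ (valid since $b(\tau)\in\script{A}(b(\tau),\omega(\tau))$) with $\gamma_i^*=\hat{u}_i(b(\tau),\omega(\tau))$ also kills the queue term and, using $0\leq\phi\leq\phi^{max}$, yields $\Delta(\tau)\leq D+V\phi^{max}$; telescoping from $Z(0)=0$ gives $\norm{Z(t)}\leq\sqrt{2(D+V\phi^{max})\,t}$. Summing \eqref{eq:z2-update} gives $\overline{u}_i(t)=\overline{\gamma}_i(t)-Z_i(t)/t$, hence $\norm{\overline{\gamma}(t)-\overline{u}(t)}=\norm{Z(t)}/t\leq\sqrt{(2D+2V\phi^{max})/t}$. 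Jensen's inequality and Lipschitz continuity of $\phi$ (constant $M$) then give $\frac{1}{t}\sum_{\tau=0}^{t-1}\phi(\gamma(\tau))\leq\phi(\overline{\gamma}(t))\leq\phi(\overline{u}(t))+M\norm{\overline{\gamma}(t)-\overline{u}(t)}$. Setting $t=KT$ and substituting the bound from Stage~2 yields the first displayed inequality of the theorem.

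\textbf{Stage~4 (passing to $\liminf_{t\to\infty}$) and the main obstacle.} Taking $\liminf_{K\to\infty}$ in that inequality annihilates the $M\sqrt{\cdot}$ term and gives $\liminf_{K\to\infty}\phi(\overline{u}(KT))\geq\liminf_{K\to\infty}\frac{1}{K}\sum_{k=0}^{K-1}\psi_T[k]-\frac{DT}{V}$. To replace the subsequence $\{KT\}$ by all $t$, I would note that for $KT\leq t<(K+1)T$ each running average varies slowly, $|\overline{u}_i(t)-\overline{u}_i(KT)|\leq 2(t-KT)u_i^{max}/t\to0$, so by continuity of $\phi$ on the compact hyper-rectangle $\phi(\overline{u}(t))-\phi(\overline{u}(\lfloor t/T\rfloor T))\to0$, whence $\liminf_{t\to\infty}\phi(\overline{u}(t))=\liminf_{K\to\infty}\phi(\overline{u}(KT))$, which is \eqref{eq:last}. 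I expect the only real friction to be the Stage~1 bookkeeping --- isolating $Z_i(kT)$ so that the lookahead competitor exactly cancels the queue-weighted term, and tracking the $\sum_{m=0}^{T-1}m$ slack so that the constant comes out as $T^2D$ (hence $DT/V$ after dividing by $VKT$) --- together with the mildly delicate Stage~4 point that a $\liminf$ taken along frame boundaries coincides with the full $\liminf$ only because of the slow-variation estimate for running averages.
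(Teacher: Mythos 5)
Your proposal is correct and follows essentially the same route as the paper's own three-part proof: your Stages 1--2 reproduce the paper's Part 2 (frame-head queue substitution costing $DT(T-1)$, cancellation of the queue-weighted term by the $\epsilon$-optimal lookahead competitor satisfying \eqref{eq:conservative2}, then telescoping), your Stage 3 is the paper's Part 1 (baseline competitor, $\Delta(\tau)\leq D+V\phi^{max}$, Jensen plus Lipschitz), and your Stage 4 is the paper's Part 3 slow-variation argument for passing from frame boundaries to all $t$. No gaps.
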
 

The theorem is proven in three parts. 

\begin{proof} (Part 1) 
This part proves that: 
\begin{equation} \label{eq:part1} 
\phi(\overline{u}(KT)) \geq \frac{1}{KT}\sum_{\tau=0}^{KT-1}\phi(\gamma(\tau)) - M\sqrt{\frac{2(D+V\phi^{max})}{KT}} 
\end{equation} 
To this end, note that \eqref{eq:conservative-DPP} implies that  for all rounds $\tau$: 
\begin{align} 
&\Delta(\tau) - V\phi(\gamma(\tau)) \nonumber \\
&\leq D - V\phi(\gamma^*(\tau)) \nonumber\\
&+\sum_{i=1}^NZ_i(\tau)[\gamma_i^*(\tau) - \hat{u}_i(\alpha^*(\tau),\omega(\tau))] \label{eq:have} 
\end{align} 
where $\gamma^*(\tau)$ and $\alpha^*(\tau)$ are any vectors that satisfy $\alpha(\tau) \in \script{A}(b(\tau),\omega(\tau))$ and $\gamma_i^*(\tau) \in [0, u_i^{max}]$ for all $i \in \script{N}$.
Choose $\alpha^*(\tau)=b(\tau)$ and $\gamma^*(\tau) = (\gamma_1^*(\tau), \ldots, \gamma_N^*(\tau))$ where $\gamma_i^*(\tau) = \hat{u}_i(b(\tau),\omega(\tau))$ for all $i \in \script{N}$.  Substituting these choices into \eqref{eq:have} gives: 
\[ \Delta(\tau) - V\phi(\gamma(\tau)) \leq D - V\phi(\gamma^*(\tau)) \] 
and hence: 
\[ \Delta(\tau) \leq D + V\phi^{max} \]
Summing over $\tau \in \{0, \ldots, KT-1\}$ gives: 
\[ \frac{1}{2}\norm{Z(KT)}^2 - \frac{1}{2}\norm{Z(0)}^2 \leq  KT(D+V\phi^{max}) \]
Rearranging terms and using $\norm{Z(0)}=0$ gives: 
\[ \frac{\norm{Z(KT)}}{(KT)} \leq \sqrt{\frac{2(D+V\phi^{max})}{KT}} \]
From \eqref{eq:z2-update} it holds that and $\norm{Z(KT)}/t = \norm{\overline{\gamma}(KT)-\overline{u}(KT)}$ and so: 
\[ \norm{\overline{\gamma}(KT)-\overline{u}(KT)} \leq \sqrt{\frac{2(D+V\phi^{max})}{KT}} \]
By \eqref{eq:lip} it follows that: 
\[ \frac{1}{KT}\sum_{\tau=0}^{KT-1} \phi(\gamma(\tau)) \leq \phi(\overline{u}(KT)) + M\sqrt{\frac{2(D+V\phi^{max})}{KT}} \]
which proves \eqref{eq:part1}. 
\end{proof} 

\begin{proof} (Part 2) This part shows that: 
\begin{equation} \label{eq:part2} 
\frac{1}{KT} \sum_{\tau=0}^{KT-1} \phi(\gamma(\tau))  \geq \frac{1}{K}\sum_{k=0}^{K-1} \psi_T[k] - \frac{DT}{V} 
\end{equation} 
To this end, note that summing \eqref{eq:have} over $\tau \in \{kT, \ldots, (k+1)T-1\}$ gives: 
\begin{align*}
&\sum_{\tau=kT}^{(k+1)T-1}\Delta(\tau) - V\sum_{\tau=kT}^{(k+1)T-1}\phi(\gamma(\tau))   \\
&\leq DT - V\sum_{\tau=kT}^{(k+1)T-1}\phi(\gamma^*(\tau)) \\
& + \sum_{\tau=kT}^{(k+1)T-1}\sum_{i=1}^NZ_i(\tau)[\gamma_i^*(\tau) - \hat{u}_i(\alpha^*(\tau), \omega(\tau))] \\
&\leq DT^2 - V\sum_{\tau=kT}^{(k+1)T-1}\phi(\gamma^*(\tau)) \\
& + \sum_{i=1}^NZ_i(kT)\sum_{\tau=kT}^{(k+1)T-1}[\gamma_i^*(\tau) - \hat{u}_i(\alpha^*(\tau), \omega(\tau))] \\
\end{align*}
where the final step is similar to a step in the proof of Theorem \ref{thm:performance}. 
Fix $\epsilon>0$ and 
define $\gamma^*=(\gamma_1^*, \ldots, \gamma_N^*)$ and $\alpha^*(\tau)$ for $\tau \in \{kT, \ldots, (k+1)T-1\}$ as the 
vectors that satisfy 
\eqref{eq:conservative2}, \eqref{eq:conservative3}, \eqref{eq:finally-satisfy}, and 
define $\gamma_i^*(\tau) = \gamma_i^*$ for all $\tau \in \{kT, \ldots, (k+1)T-1\}$.  Substituting these into the above inequality gives: 
\begin{align*}
&\sum_{\tau=kT}^{(k+1)T-1}\Delta(\tau) - V\sum_{\tau=kT}^{(k+1)T-1}\phi(\gamma(\tau))   \\
&\leq DT^2 - VT\psi_T[k] + VT\epsilon 
\end{align*}
Taking $\epsilon\rightarrow 0$ and then 
summing over $k \in \{0, \ldots, K-1\}$ gives: 
\[ L(KT)-L(0) - V\sum_{\tau=0}^{KT-1}\phi(\gamma(\tau)) \leq DT^2K - VT\sum_{k=0}^{K-1}\psi_T[k] \]
Dividing by $VKT$ and using the fact that $L(KT)-L(0)\geq 0$ gives: 
\[ -\frac{1}{KT}\sum_{\tau=0}^{KT-1} \phi(\gamma(\tau)) \leq \frac{DT}{V} - \frac{1}{K}\sum_{k=0}^{K-1} \psi_T[k] \]
which proves \eqref{eq:part2}.
\end{proof}

\begin{proof} (Part 3)  This part proves \eqref{eq:last}.  To this end, note that parts 1 and 2 together imply: 
\[ \phi(\overline{u}(KT)) \geq \frac{1}{K}\sum_{k=0}^{K-1} \psi_T[k] - \frac{DT}{V} - M\sqrt{\frac{2D+2V\phi^{max}}{KT}} \]
Taking a $\liminf$ of both sides as $K\rightarrow\infty$ gives: 
\begin{equation} \label{eq:batty} 
\liminf_{K\rightarrow\infty} \phi(\overline{u}(KT)) \geq \liminf_{K\rightarrow\infty} \frac{1}{K}\sum_{k=0}^{K-1} \psi_T[k] - \frac{DT}{V} 
\end{equation} 

It remains to show that the left-hand-side of \eqref{eq:batty} can be replaced by $\liminf_{t\rightarrow\infty} \phi(\overline{u}(t))$. To do this, 
fix $t$ as a positive integer.  Let $K_t$ be the non-negative integer such that $K_t T \leq t < (K_t+1)T$. Then: 
\[ \overline{u}(t) = \overline{u}(K_tT)\frac{K_tT}{t} + \frac{\sum_{\tau=K_tT}^{t-1}u(\tau)}{t}\]
In particular: 
\[ \overline{u}(t) = \overline{u}(K_tT) - \overline{u}(K_tT)\frac{(t-K_tT)}{t} + \frac{\sum_{\tau=K_tT}^{t-1}u(\tau)}{t} \]
Thus: 
\[ \norm{\overline{u}(t) - \overline{u}(K_tT)} \leq \frac{\sqrt{\sum_{i=1}^N(T-1)u_i^{max}}}{t} \]
Thus: 
\[ \phi(\overline{u}(t)) \geq \phi(\overline{u}(K_tT)) - M\frac{\sqrt{\sum_{i=1}^N(T-1)u_i^{max}}}{t} \]
Taking a $\liminf$ of both sides as $t\rightarrow\infty$ gives: 
\begin{eqnarray*}
 \liminf_{t\rightarrow\infty} \phi(\overline{u}(t)) &\geq& \liminf_{t\rightarrow\infty} \phi(\overline{u}(K_tT)) \\
 &\geq&  \liminf_{K\rightarrow\infty} \phi(\overline{u}(KT)) 
 \end{eqnarray*}
 This together with \eqref{eq:batty} proves the result. 
\end{proof} 

\section{Conclusion} 

This paper considers a stochastic repeated game where players share information and a baseline decision with a game manager at the beginning of each round.  The manager provides suggestions that, if taken, maximize a concave function of average utilities across players subject to the constraint that each player receives a time average utility at least as good as it would get if all players used their baseline strategies.  A more conservative scenario was also considered where the utility guarantee is enforced every round, rather than in a time average.  A Lyapunov optimization algorithm was developed that satisfies the constraints and that ensures the concave function of utilities is close to (or better than) an average of $T$-slot lookahead values that are computed with knowledge of $T$ rounds into the future, regardless of the sample path.   This shows that a simple causal algorithm can achieve a target defined in terms of future knowledge. 

\bibliographystyle{unsrt}
\bibliography{../../latex-mit/bibliography/refs}
\end{document}